\journalname{BIT}
\date{ \phantom{b} \vspace{45mm}\phantom{e}}
\def\iu{{\rm i}}
\def\e{{\rm e}}
\def\d{{\rm d}}
\def\Z{{\mathbb Z}}
\def\real{{\mathbb R}}
\def\eps{\varepsilon}
\def\bigo{{\mathcal O}}
\def\signed #1 {{\unskip\nobreak\hfil\penalty50
    \hskip2em\hbox{}\nobreak\hfil\small#1
    \parfillskip=0pt \finalhyphendemerits=0 \par}}
\newcommand{\sfrac}[2]{\mbox{\footnotesize$\displaystyle\frac{#1}{#2}$}}
\newcommand\calM{{\cal M}}
\newcommand\calN{{\cal N}}
\newcommand\calI{{\cal I}}
\newcommand\calK{{\cal K}}
\newcommand\calE{{\cal E}}
\newcommand\calU{{\cal U}}
\newcommand\calR{{\cal R}}
\newcommand\bfd{{\mathbf d}}
\newcommand\bfk{{\mathbf k}}
\newcommand\bfp{{\mathbf p}}
\newcommand\bfq{{\mathbf q}}
\newcommand\bfr{{\mathbf r}}
\newcommand\bfg{{\mathbf g}}
\newcommand\bfv{{\mathbf v}}
\newcommand\bfw{{\mathbf w}}
\newcommand\bfx{{\mathbf x}}
\newcommand\bfz{{\mathbf z}}
\newcommand\bfLambda{{\boldsymbol \Lambda}}
\newcommand\bfvartheta{{\boldsymbol \vartheta}}
\newcommand\bfvarpi{{\boldsymbol \varpi}}
\newcommand\bfomega{{\boldsymbol \omega}}
\newcommand\bfOmega{{\mathbf \Omega}}
\newcommand\bfChi{{\boldsymbol \chi}}
\newcommand\bfPsi{{\mathbf \Psi}}
\newcommand\bfPhi{{\mathbf \Phi}}
\newcommand\bfzero{{\mathbf 0}}
\newcommand\jvec{{\langle j \rangle}}
\newcommand\sinc{\,\hbox{\rm sinc}}
\newcommand\sincws{\hbox{\rm sinc}}
\def\T{{\mathsf T}}
\begin{document}

\title{Long-term analysis of numerical integrators 
for oscillatory Hamiltonian systems under minimal non-resonance conditions
\thanks{This work has been supported by the Fonds National Suisse,
Project No.~200020-144313/1.
}}
\titlerunning{Long-term analysis of oscillatory Hamiltonian systems}

\author{David Cohen \and Ludwig Gauckler \and \\ Ernst Hairer \and Christian Lubich}
\institute{David Cohen \at Matematik och matematisk statistik,
Ume\r{a} universitet, SE-90187 Ume\r{a}, Sweden. \\
\email{david.cohen@math.umu.se}
\and
Ludwig Gauckler \at Institut f\"ur Mathematik, TU Berlin, Stra\ss e des 17.\ Juni 136, D-10623 Berlin, Germany. \\
\email{gauckler@math.tu-berlin.de}
\and
Ernst Hairer \at Section de math\'ematiques, 2-4 rue du Li\`evre,
Universit\'e de Gen\`eve, CH-1211
Gen\`eve 4,
Switzerland.\\
\email{Ernst.Hairer@unige.ch} 
\and
Christian Lubich \at Mathematisches Institut, Universit\"at T\"ubingen, Auf der Morgenstelle,
  D-72076 T\"ubingen, Germany.\\
  \email{Lubich@na.uni-tuebingen.de}}

\authorrunning{D. Cohen, L. Gauckler, E. Hairer and Ch. Lubich}

\date{}

\maketitle
\begin{abstract} For trigonometric and modified trigonometric integrators applied to oscillatory Hamiltonian differential equations with one or several constant high frequencies, near-conservation of the total and oscillatory energies are shown over time scales that cover arbitrary negative powers of the step size. 
This requires non-resonance conditions between the step size and the frequencies, but in
contrast to previous results the results do not require any non-resonance conditions among the frequencies. The proof uses modulated Fourier expansions with appropriately modified frequencies. 
\keywords{Oscillatory Hamiltonian systems \and Modulated Fourier expansions \and
Trigonometric integrators \and St\"ormer-Verlet scheme \and IMEX scheme \and
Long-time energy conservation \and Numerical resonances \and Non-resonance condition}
\subclass{65P10 \and 65L05 \and 34E13}
\end{abstract}

\section{Introduction}
\label{sect:intro}
This paper is concerned with the energy behaviour over long times of numerical methods for oscillatory differential equations with one or several constant high frequencies:
\begin{equation}\label{odec}
\ddot q_j + \omega_j^2 q_j =
 -\nabla_j U(\bfq ) , \qquad j=0,\dots,\ell ,
\end{equation}
where $\bfq = (q_{0},q_{1},\ldots ,q_{\ell})$ and the frequencies are $\omega_0=0$ and
\begin{equation}\label{frequencies}
\omega_{j} \ge \frac1{\eps}, \quad
0<\eps\ll 1 , \qquad j=1,\dots,\ell.
\end{equation}
The coupling potential $U$ is smooth with derivatives bounded independently of~$\eps$.

The numerical long-time near-conservation of energy for such equations has already been studied before for various numerical integrators, which will be considered also here: for trigonometric integrators in \cite{hairer00lec,cohen05nec} and  \cite[Chapter XIII]{hairer06gni}, for the St\"ormer--Verlet method in  \cite{hairer00ecb}, for an implicit-explicit (IMEX) method in \cite{stern08iev,mclachlan13mti}. Most of these results concern the single-frequency case ($\ell=1$). A nontrivial extension to the multi-frequency case ($\ell>1$) is given
in~\cite{cohen05nec}. 

The proofs in those papers 
require numerical non-resonance conditions: first, it is needed that the product $h\omega_j$ of the step size $h$ with the high frequencies (or $h\widetilde\omega_j$ for methods that effectively work with different numerical frequencies $\widetilde\omega_j$, such as the St\"ormer--Verlet and IMEX methods) is bounded away from integral multiples of $\pi$ by a distance substantially larger than $h$, e.g., by $\sqrt{h}$. Numerical experiments, e.g., in \cite[Chapter XIII]{hairer06gni}, show clearly that this numerical non-resonance condition is  necessary for a satisfactory numerical energy behaviour.

In this paper we show that without any further non-resonance condition, the {\it slow energy}, i.e., the total energy minus the oscillatory energy, remains well conserved over long times $t\le h^{-N}$ for an arbitrary integer $N$, in the numerically interesting range $h\ge c_0\eps$,
provided the total energy remains
bounded over such an interval.

To obtain also near-conservation of the {\it total and oscillatory energies} over times $t\le h^{-N}$, it is required in \cite{cohen05nec} and \cite[Chapter XIII]{hairer06gni} that sums of $\pm h\omega_j$ with at most $N+1$ terms
must stay away from integral multiples of $2\pi$. Here we will show that it suffices that they are away from {\it non-zero} integral multiples of $2\pi$, and we present numerical results that  illustrate the necessity of such a non-resonance condition between the step size and the frequencies.

 Away from numerical near-resonances between the step size and the frequencies, our results will be uniform in the frequencies, without imposing any non-resonance condition among the frequencies.

What enables us to obtain long-time near-conservation of the total, slow  and oscillatory energies under much less restrictive conditions than in the existing literature, is that we adopt ideas and techniques from \cite{gauckler13esi} for the exact solution and combine them with those of \cite{hairer00lec} and \cite{cohen05nec} for the numerical solution. In \cite{gauckler13esi}  and more recently also in \cite{bambusi13nfa} it is shown that the oscillatory energy, i.e., the sum of the harmonic energies $\frac12 |\dot q_j|^2 +
\frac12 \omega_j^2 |q_j|^2$ over $j=1,\dots,\ell$, is nearly conserved over times $\eps^{-N}$ for arbitrary integers $N$, with estimates that are uniform in the frequencies $\omega_j$ satisfying (\ref{frequencies}). In the proof of this result, integral linear combinations of the frequencies are regrouped into non-resonant and near-resonant ones, with a gap appearing between them whose size depends only on $\ell$ and $N$, and then the frequencies are modified such that the near-resonances become exact resonances. With these modified frequencies, one then uses a modulated Fourier expansion and its almost-invariants  as done previously in the literature. We will proceed in a similar way in this paper for the numerical problem and prove a numerical counterpart to the analytical result of \cite{gauckler13esi} and \cite{bambusi13nfa}.

After the preparatory Section~\ref{sect:prep} we state, in Section~\ref{sect:results}, our main results on the long-time near-conservation of the slow and oscillatory energies along numerical solutions of (\ref{odec}).
After some illustrative numerical experiments in Section~\ref{sect:numexp},
the theoretical results are proved in the remaining sections.
We introduce appropriately modified frequencies in Section~\ref{sect:gap} and use them  in the modulated Fourier expansion constructed in Section~\ref{sect:mfe}. The result on bounds and approximation properties of the modulated Fourier expansion, which is stated in Section~\ref{sect:mfe}, is proved in Section~\ref{sect:proof-mfe}. Two almost-invariants of the modulation system, which are close to the slow and oscillatory energies, are studied in Section~\ref{sect:invariants}. We are then in the position to prove the main results in Section~\ref{sect:proof-thm12}.

\section{Preparation}
\label{sect:prep}

\subsection{Oscillatory, slow and total energies and equations of motion}
For momenta $\bfp = (p_{0},p_{1},\ldots ,p_{\ell})$ and positions
$\bfq = (q_{0},q_{1},\ldots ,q_{\ell})$ with $p_{j},q_{j}\in\real^{d_{j}}$
we consider the  Hamiltonian 
\[
H(\bfp ,\bfq ) =  H_{\bfomega}(\bfp ,\bfq ) + H_{\rm slow} (\bfp ,\bfq ) ,
\]
where the oscillatory and slow-motion energies are given by
\[
H_{\bfomega}(\bfp ,\bfq )  =  \sum_{j=1}^\ell \sfrac 12\Bigl( |p_{j}|^2 +
 \omega_{j}^2 \,|q_{j}|^2\Bigr), \qquad
H_{\rm slow}(\bfp ,\bfq ) = \sfrac 12  |p_{0}|^2 + U(\bfq ) .
\]
We assume high frequencies satisfying (\ref{frequencies}). 
If the oscillatory energy is bounded by a constant independent of $\eps$, then
$q_j=\bigo(\eps)$ for $j=1,\dots,\ell$, and we have that the slow energy $H_{\rm slow}(\bfp ,\bfq )$ is $\bigo(\eps)$ close to the energy of the isolated slow system $H_0(p_0,q_0) = \frac 12  |p_{0}|^2 + U(q_0,0,\dots,0)$.

The equations of motion are (\ref{odec}) or, in vector notation,
\begin{equation}\label{ode}
\ddot \bfq + \bfOmega^2 \bfq = \bfg (\bfq) 
\end{equation}
with the nonlinearity 
$\bfg (\bfq)= -\nabla U(\bfq ) $, and where $\bfOmega$ is the
diagonal matrix with entries $\omega_j$. 

\subsection{Trigonometric integrators}
For the numerical solution of (\ref{ode}) we consider trigonometric
methods as studied in \cite[Chapter XIII]{hairer06gni}. 
With the step size $h$, they are given in two-step form by
\begin{equation}\label{trigo}
\begin{array}{rcl}
\bfq_{n+1} - 2 \cos (h\bfOmega ) \bfq_n + \bfq_{n-1} &=& h^2 \bfPsi\bfg (\bfPhi \bfq_n )\\[2mm]
2h \sinc (h\bfOmega ) \bfp_n &=& \bfq_{n+1} - \bfq_{n-1} ,
\end{array}
\end{equation}
where $\bfPsi = \psi (h\bfOmega )$ and $\bfPhi = \phi (h\bfOmega )$ with real-valued
bounded functions $\psi$ and $\phi$ satisfying $\psi (0)=\phi (0)=1$,
and $\sincws (\xi ) = \sin (\xi )/\xi$. For starting the computation we put
$\bfq_0 = \bfq (0)$, $\bfp_0 = \bfp (0)$ and we compute the
approximation $\bfq_1$ by putting $n=0$ in (\ref{trigo}) and by
eliminating $\bfq_{-1}$. This yields
\begin{equation}\label{trigo-q1}
\bfq_1 = \cos (h\bfOmega ) \bfq_0 + h \sinc (h\bfOmega ) \bfp_0 +
\sfrac 12 h^2 \bfPsi \bfg(\bfPhi \bfq_0 ) .
\end{equation}
It is known from \cite[Section XIII.2.2]{hairer06gni} that the method is symplectic if and only if 
\begin{equation}\label{symplectic}
\psi (h\omega_j) = \sincws (h\omega_j ) \phi (h\omega_j )\qquad
\hbox{for}\quad j=1,\ldots ,\ell.
\end{equation}

\subsection{Modified trigonometric integrators}\label{sect:mod-trigo}
We further consider methods defined by
\begin{equation}\label{trigo-mod}
\begin{array}{rcl}
\bfq_{n+1} - 2 \cos (h\widetilde\bfOmega ) \bfq_n + \bfq_{n-1} &=& 
h^2 \widetilde\bfPsi\bfg (\widetilde\bfPhi \bfq_n )\\[2mm]
2h\, \widetilde \bfChi\, \bfp_n &=& \bfq_{n+1} - \bfq_{n-1} ,
\end{array}
\end{equation}
where $\widetilde\bfOmega$ is a diagonal matrix with entries $\widetilde\omega_j$
such that $\widetilde\omega_0 =0$,
$\widetilde\bfPsi = \psi (h\widetilde\bfOmega )$,
$\widetilde\bfPhi = \phi (h\widetilde\bfOmega )$, and
$\widetilde\bfChi = \chi (h\widetilde\bfOmega )$ with $\chi (0)=1$.
The choice
$\widetilde\bfChi = \bfOmega^{-1} \widetilde\bfOmega\sinc (h\widetilde\bfOmega ) $
is proposed in \cite{mclachlan13mti}. The method is symplectic if and only if 
\[
\psi (h\widetilde\omega_j) = \chi (h\widetilde\omega_j ) \phi (h\widetilde\omega_j )\qquad
\hbox{for}\quad j=1,\ldots ,\ell.
\]

An important class of such symplectic methods is given by
\begin{equation}\label{trigo-alpha}
\begin{array}{rcl}
\bfq_{n+1} - 2 \bfq_n + \bfq_{n-1} &+& h^2\bfOmega^2 \bfq_n +
\alpha h^2 \bfOmega^2 (\bfq_{n+1} - 2 \bfq_n + \bfq_{n-1} ) =
 h^2 \bfg (\bfq_n ) \\[2mm]
2h\, \bfp_n &=& (I + \alpha h^2 \bfOmega^2 )(\bfq_{n+1} - \bfq_{n-1} ).
\end{array}
\end{equation}
This can be written as a method (\ref{trigo-mod}) by defining $h\widetilde\omega_j
\in [0,\pi ]$
through (see \cite{hairer00ecb}; we omit the subscript $j$)
\[
\cos (h\widetilde\omega ) = \frac{1+(\alpha -\frac 12) h^2\omega^2}{1+\alpha h^2\omega^2}
\quad\hbox{or equivalently}\quad
\sin \bigl( \tfrac 12 h\widetilde\omega \bigr) = \frac {\tfrac 12 h\omega}
{\sqrt{1+\alpha h^2 \omega^2 }} ,
\]
provided that $h\omega < 2/\sqrt{1-4\alpha}$ if $\alpha < 1/4$, and without any
restriction on $h\omega$ if $\alpha \ge 1/4$. With these modified
frequencies, the method (\ref{trigo-alpha}) becomes (\ref{trigo-mod}) with
\[
\phi (\xi )=1,\qquad \psi (\xi ) = \chi (\xi )= 1 -4\alpha \sin^2 \bigl(\tfrac 12 \xi \bigr).
\]
The St\"ormer--Verlet method is the special case $\alpha =0$ of (\ref{trigo-alpha}),
and the implicit-explicit (or IMEX) integrator of \cite{zhang97cis} and \cite{stern08iev}
is the special case $\alpha = 1/4$.

\section{Main results on energy conservation}
\label{sect:results}

We prove results on numerical energy conservation for trigonometric and modified trigonometric
integrators. 
The technique of proof is related to that of
\cite{gauckler13esi} where a gap condition is created by suitably
modifying the frequencies. 

We collect assumptions that are relevant for all theorems to be
presented in this work.
\medskip

\noindent
\bf Assumption A. \rm
In addition to (\ref{frequencies}) we assume the following:
\begin{itemize}
\item
The total energy
of the initial values is bounded independently of
$\varepsilon$,
\begin{equation}\label{bound-energy}
 H\bigl(\bfp (0) , \bfq (0) \bigr)  \le E .
\end{equation}
\item
There is a radius $\rho >0$ and a set $K\subset \real^{d_0}$ such that the potential
$U(\bfq )$ is bounded and has bounded derivatives of all orders in a $\rho$-neighbourhood of
$K\times 0 \times \cdots\times 0$. We denote this $\rho$-neighbourhood by $K_\rho$.
\item
The numerical solution values $\bfPhi \bfq_n$
(or $\widetilde \bfPhi \bfq_n$) stay in $K_{\rho/2}$. 
\item The step size $h$ satisfies
 $h/\varepsilon \ge c_0 >0$.
\item
The frequencies $\omega_j$ are such that
\begin{equation}\label{eq-nonres}
| \sin (h\omega_j ) | \ge \kappa=\kappa(h) \ge \sqrt h \qquad\hbox{for}\quad   j=1,\dots,\ell .
\end{equation}
\end{itemize}
For modified trigonometric integrators this is assumed for the frequencies $\widetilde\omega_j$ instead of $\omega_j$.

\medskip

\noindent
\bf Assumption B. \rm
The filter functions $\phi$ and $\psi$ of the method (\ref{trigo}) 
are such that the function
\begin{equation}\label{sigma}
\sigma (\xi ) = \sincws (\xi )\, \frac{\phi (\xi )}{ \psi (\xi )} 
\end{equation}
is bounded from below and above:
\[
0 < c_1 \le \sigma (h\omega_j) \le  C_1 \qquad\hbox{for}\quad j=1,\ldots , \ell,
\]
or the same estimate holds for $-\sigma$ instead of $\sigma$.

\begin{remark}
The first three items of Assumption A are necessary for the following results.
The last two items can be relaxed to the conditions $h/\eps \ge c_0 h^\beta$
and $\kappa (h) \ge h^\gamma$ with positive $\beta$ and $\gamma$ smaller
than $1$, which
lead to weaker estimates. 

Assumption B is needed in our proofs. Numerical experiments indicate, however,
that different signs among the $\sigma (h\omega_j )$ do not necessarily lead
to a different energy behaviour.
\end{remark}

\subsection{Energy conservation for trigonometric integrators}
\label{sect:multiple}

As in \cite{cohen05nec}
we consider the modified oscillatory energy, with $\sigma (\xi )$ from (\ref{sigma}),
\begin{equation}\label{mod-energies}
H_{\bfomega}^*(\bfp ,\bfq )  = 
  \sum_{j=1}^\ell \sigma (h\omega_j) \sfrac 12 \Bigl( |p_{j}|^2 + \omega_{j}^2 \,|q_{j}|^2\Bigr) .
\end{equation}
If $\sigma (h\omega_j )=1$ for $j=1,\ldots ,\ell$, this  expression is identical
to the oscillatory energy~$H_\bfomega$. This condition on $\sigma$ is equivalent to the symplecticity of the numerical flow
defined by (\ref{trigo}), see (\ref{symplectic}).

\begin{theorem}\label{thm:main}
We fix an arbitrary integer $N\ge 1$ and $0< \delta \le 1/4$. Then there exists $h_0>0$
such that under Assumptions A and B, the numerical solution obtained by method
(\ref{trigo}) satisfies, for $h\le h_0$,
\[
H_{\rm slow}(\bfp_n , \bfq_n ) = H_{\rm slow}(\bfp_0 , \bfq_0 ) + \bigo (h^{1-\delta}) 
\quad \hbox{for } \quad 0 \le nh \le h^{-N} ,
\]
as long as $H(\bfp_n,\bfq_n) \le \hbox{Const.}$
If, in addition, the step size and the frequencies satisfy the following numerical non-resonance condition:
\begin{equation} \label{num-nonres}
\begin{array}{l}
\hbox{Sums of $\pm h\omega_j$ with at most $N+1$ terms are bounded away from} 
\\
\hbox{nonzero integral multiples of $2\pi$ with a distance of at least $ \sqrt h$,}
\end{array}
\end{equation}
then we further have, with $\kappa$ from (\ref{eq-nonres}),
\[
H_\bfomega^*(\bfp_n , \bfq_n ) = H_\bfomega^*(\bfp_0 , \bfq_0 ) + 
\bigo (h^{1-\delta}/\kappa )
\quad \hbox{for } \quad 0 \le nh \le h^{-N} . 
\]
The constants symbolized by $\bigo$ are independent of $n$, $h$, $\varepsilon$,
and $\omega_j$, but depend on $\ell$, $N$, $\delta$, and the constants in 
Assumptions A and~B.
The maximal step size
$h_0$ is independent of the frequencies $\omega_j$.
\end{theorem}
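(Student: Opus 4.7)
The plan is to combine the modulated Fourier expansion for trigonometric integrators developed in \cite{cohen05nec} and \cite[Chapter XIII]{hairer06gni} with the frequency-modification technique of \cite{gauckler13esi}. The ultimate goal is to exhibit two almost-invariants $\calH[\bfz]$ and $\calI[\bfz]$ of a modulation system such that $\calI$ approximates $H_\bfomega^*(\bfp_n,\bfq_n)$ and $\calH-\calI$ approximates $H_{\rm slow}(\bfp_n,\bfq_n)$, both $\calH$ and $\calI$ are conserved along the modulation system up to $\bigo(h^{N+1})$ on intervals of length $1/h$, and these small drifts are then patched over $h^{-N-1}$ consecutive intervals to cover $0\le nh \le h^{-N}$.

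I would first construct, in Section~\ref{sect:gap}, modified frequencies $\bar\omega_j$ close to $\omega_j$ such that every integer linear combination $\bfk\cdot h\bar\bfomega$ of index length up to a fixed $K=K(\ell,N)$ is \emph{either} an integer multiple of $2\pi$ \emph{or} separated from $2\pi\Z$ by a gap depending only on $\ell$ and $N$. This is the numerical analogue of the gap construction in \cite{gauckler13esi}. With these frequencies I would then set up in Section~\ref{sect:mfe} a modulated Fourier expansion
\[
\bfq_n \approx \sum_{\bfk}\, \e^{\iu (\bfk\cdot\bar\bfomega)\, n h}\, \bfz^\bfk(\eps t_n),
\]
insert it into (\ref{trigo}), Taylor-expand the slow modulation functions in $h$, and compare coefficients of each distinct exponential. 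Because of the gap property only resonant indices $\bfk$ carry $\bigo(1)$ modulation functions, while non-resonant indices pick up inverse powers of the gap multiplied by $h$, yielding the a priori bounds on the $\bfz^\bfk$. Hypothesis (\ref{eq-nonres}) enters as a lower bound $\kappa\ge\sqrt h$ on the denominators $\sin(h\omega_j)$ that appear when inverting the two-step operator to read off the numerical momentum, and it also keeps the filter functions $\psi,\phi$ away from zeros. The same section will yield the approximation result that the truncated expansion agrees with $\bfq_n$ to within $\bigo(h^{N+1})$ on time intervals of length $1/h$.

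In Section~\ref{sect:invariants} the modulation system is shown to possess a Hamiltonian-like structure, and from its invariance under the phase shifts $\bfz^\bfk\mapsto \e^{\iu\, \bfk\cdot\bfvartheta}\bfz^\bfk$ one reads off the two almost-invariants $\calH$ and $\calI$ by a Noether-type argument. The decisive observation is that the slow almost-invariant $\calH-\calI$ is invariant under \emph{all} phase shifts $\bfvartheta$, so that its near-conservation survives the passage back to the numerical solution \emph{without} the extra condition (\ref{num-nonres}); whereas the separate near-conservation of $\calI$ requires that condition because only then are the non-zero resonant index groups that would spoil the identification $\calI\approx H_\bfomega^*$ absent or harmless, and the factor $1/\kappa$ in the second statement of the theorem comes from the inversion of $\sin(h\omega_j)$ in this identification. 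Patching then delivers both conclusions in Section~\ref{sect:proof-thm12}.

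The main obstacle I anticipate is twofold. First, constructing and estimating the modulated Fourier expansion \emph{without} any non-resonance assumption among the $\omega_j$: the gaps produced by the frequency modification are uniform only in $\ell$ and $N$, and one must prevent powers of $1/\varepsilon$ from accumulating when reciprocal gaps are multiplied by derivatives of the slow-time modulation functions. Second and more delicate is identifying $\calH-\calI$ evaluated along the modulation functions with $H_{\rm slow}(\bfp_n,\bfq_n)$ to within $\bigo(h^{1-\delta})$ without invoking the non-resonance on sums of $h\omega_j$, which requires a careful bookkeeping of which resonant index groups contribute to the slow part and which only to the oscillatory part, in the spirit of \cite{gauckler13esi,bambusi13nfa}.
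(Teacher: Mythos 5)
Your overall architecture --- modified frequencies creating a gap, a modulated Fourier expansion built on those frequencies, two almost-invariants, and patching over many short intervals --- is the same as the paper's. But two of your choices would derail the argument if followed literally. First, the almost-invariant bookkeeping: you propose to conserve a total-energy invariant $\calH$ and an oscillatory invariant $\calI$ separately and to recover the slow energy as $\calH-\calI$. Without condition (\ref{num-nonres}) the module $\calM$ of Lemma~\ref{prop-modifiedfrequencies} may contain $\bfk$ for which $\frac h2\,\bfk\cdot\bfvarpi$ is a \emph{nonzero} multiple of $\pi$, so the averaged potential $\calU$ is not invariant under the continuous phase shifts $S(\theta)$; then neither $\calI$ nor $\calH$ (whose Noether pairing involves the full time derivative $h^{-\alpha}\dot z_j^{-\bfk}+\iu(\bfk\cdot\bfvarpi)z_j^{-\bfk}$) is an almost-invariant, and your opening claim that both drift only by $\bigo(h^{N+1})$ is false in the unconditional regime. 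What survives unconditionally is exactly the combination obtained by pairing the modulation equations with the slow-time derivatives $\dot z_j^{-\bfk}$ alone, for which $\sum_{j,\bfk}(\dot z_j^{-\bfk})^\T\nabla_j^{-\bfk}\calU=\frac{\d}{\d\tau}\calU(\bfPhi\bfz)$ is a total derivative by the chain rule, with no symmetry of $\calU$ required. The paper builds this slow almost-invariant $\calE$ directly (Section~\ref{sect:invariant1}); one must then also check, using \eqref{eq-diff-om-omt}, that the $\bfk=\pm\jvec$ contributions to $\calE$ are only $\bigo(\eps h^{-\alpha})$, which is why $\calE$ matches $H_{\rm slow}$ rather than the total energy.

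Second, the time scales. The slow variable cannot be $\eps t$: after the frequency modification the surviving small denominators are bounded below only by $\frac12 h^{1-\alpha-\mu}$, and the mismatch term $2\bigl(\cos(h\omega_j)-\cos(h\varpi_j)\bigr)z_j^{\bfk}$, of size up to $h^{1-\alpha+\mu}|z_j^\bfk|$, would produce a coefficient as large as $h^{2\mu-1}/\eps\ge c_0\,h^{2\mu-2}$ in the reduced equation for $z_j^{\pm\jvec}$ if the derivative carried only a factor $\eps h$. The paper instead takes the slow time $\tau=h^{-\alpha}t$ with $\alpha\in[\delta/2,\delta]$ matched to the gap of Lemma~\ref{lemma-gap}; all coefficients then shrink by $h^{\mu}$ per Picard iteration, the expansion is valid only on intervals of length $h^{\alpha}$ (not $1/h$), and the exponent $1-\delta$ in the theorem is precisely the price $\bigo(\eps h^{-\alpha})+\bigo(h^{2(1-\alpha)})$ of this scaling in Theorem~\ref{thm:invariant1}. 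Your patching count and final error exponent must be redone accordingly: the interface errors are $\bigo(h^{N-1})$ per transition (Lemma~\ref{lem:trans}), there are $\bigo(h^{-N-\alpha})$ transitions, and the paper absorbs this by running the whole construction with $N+2$ in place of $N$.
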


For vectors $\bfk=(k_1,\dots,k_\ell )\in\Z^\ell$ of integers and the vector $\bfomega = (\omega_1,\dots,\omega_\ell )$ of frequencies we write
\[
\bfk\cdot\bfomega = \sum_{j=1}^\ell k_j\omega_j \qquad\hbox{and}\qquad 
\|\bfk\| = \sum_{j=1}^\ell |k_j|.
\]
Condition (\ref{num-nonres}) can then be rewritten as
\begin{equation}\label{num-nonres-noamal}
|h (\bfk\cdot\bfomega) - r\,2\pi| \ge \sqrt{h} \quad\  \hbox{ for all } r \in \Z, \; r \ne 0, \hbox{ for all } \bfk\in \Z^\ell \hbox{ with } \|\bfk\| \le N+1.
\end{equation}
Theorem \ref{thm:main} is related to the results of \cite{cohen05nec}. A substantial difference
is that here we do not require the numerical non-resonance condition from
\cite{cohen05nec}, which reads
\begin{equation}\label{num-nonres-altpapier}
\Bigl| \sin \Bigl( \sfrac h2 \, \bfk\cdot \bfomega \Bigr) \Bigr| \ge
 \sqrt h \qquad \hbox{for all
$\bfk\in \Z^\ell \setminus \calM$ with $\| \bfk \| \le N+1$}
\end{equation}
with the resonance module $\calM = \{\, \bfk\in \Z^\ell \, : \, \bfk\cdot \bfomega = 0 \,\}$.
This condition is more restrictive than the numerical non-resonance condition of Assumption A and of (\ref{num-nonres}), in particular in that the case $r=0$ is not required in (\ref{num-nonres-noamal}). In contrast to \cite{cohen05nec}, where near-resonances among the frequencies are excluded,
Theorem~\ref{thm:main} is uniform in the choice of the frequencies
$\omega_1 ,\ldots , \omega_\ell$. On the other hand, 
under the non-resonance condition (\ref{num-nonres-altpapier}) and under further 
assumptions on the filter functions, 
the article \cite{cohen05nec} gives improved near-conservation estimates. 

We note the following direct 
corollary on the conservation of the total and
oscillatory energies.

\begin{corollary} \label{cor:H}
If, in addition to the assumptions of Theorem~\ref{thm:main} including (\ref{num-nonres}), the method (\ref{trigo})
is symplectic, then
\[
\begin{array}{rcl}
H(\bfp_n , \bfq_n ) &=& H(\bfp_0 , \bfq_0 ) + \bigo (h^{1-\delta}/\kappa )\\[2mm]
H_\bfomega (\bfp_n , \bfq_n ) &=& H_\bfomega (\bfp_0 , \bfq_0 ) + 
\bigo (h^{1-\delta}/\kappa )
\end{array}
\quad \hbox{for } \quad 0 \le nh \le h^{-N} . 
\]
\end{corollary}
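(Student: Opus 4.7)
The plan is to deduce Corollary~\ref{cor:H} directly from Theorem~\ref{thm:main} by exploiting the fact that, for a symplectic trigonometric integrator, the modified oscillatory energy $H_\bfomega^*$ coincides with the true oscillatory energy $H_\bfomega$.

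First I would invoke the symplecticity condition (\ref{symplectic}), which reads $\psi(h\omega_j) = \sincws(h\omega_j)\,\phi(h\omega_j)$ for $j=1,\dots,\ell$. Substituting this identity into the definition (\ref{sigma}) of the function $\sigma$ yields $\sigma(h\omega_j)=1$ for each $j$, so the weights in (\ref{mod-energies}) are all equal to one and $H_\bfomega^*(\bfp,\bfq) = H_\bfomega(\bfp,\bfq)$ identically on the numerical trajectory. The second conclusion of Theorem~\ref{thm:main}, whose hypotheses (including the non-resonance condition (\ref{num-nonres})) are assumed in the corollary, then reads
\[
H_\bfomega(\bfp_n,\bfq_n) = H_\bfomega(\bfp_0,\bfq_0) + \bigo(h^{1-\delta}/\kappa)
\qquad \hbox{for } 0\le nh\le h^{-N}.
\]
Adding the first conclusion of Theorem~\ref{thm:main} on the slow energy, and noting that $\bigo(h^{1-\delta}) \subseteq \bigo(h^{1-\delta}/\kappa)$ since $\kappa\le 1$, the estimate for the total energy $H = H_\bfomega + H_{\rm slow}$ follows at once.

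The only remaining point—which in this case is just a minor bookkeeping issue—is that Theorem~\ref{thm:main} is conditional on $H(\bfp_n,\bfq_n)$ staying bounded along the orbit. A routine bootstrap closes this loop: from (\ref{bound-energy}) we have $H(\bfp_0,\bfq_0)\le E$ at the start, and on any initial segment where $H$ stays, say, below $E+1$, the estimate just derived gives $|H(\bfp_n,\bfq_n) - H(\bfp_0,\bfq_0)| \le C\,h^{1-\delta}/\kappa \le C\,h^{1/2-\delta}$ (using $\kappa\ge\sqrt h$). Since $\delta\le 1/4$, this bound is smaller than $1$ for $h\le h_0$ with $h_0$ sufficiently small (uniformly in the frequencies), so the boundedness is propagated by continuous induction over the whole interval $0\le nh\le h^{-N}$. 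Thus, once Theorem~\ref{thm:main} is in hand, the corollary amounts to observing that symplecticity normalises the weights $\sigma(h\omega_j)$ to one and then performing this bootstrap; the substantive work sits entirely in the theorem and there is no genuine obstacle at the level of the corollary itself.
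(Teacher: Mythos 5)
Your proposal is correct and follows essentially the same route the paper intends: the remark immediately preceding the corollary already notes that symplecticity of (\ref{trigo}) is equivalent to $\sigma(h\omega_j)=1$, whence $H_\bfomega^*=H_\bfomega$, and the two estimates then follow by combining the two conclusions of Theorem~\ref{thm:main} and using $\kappa\le 1$. Your bootstrap for the boundedness of $H$ is sound but largely redundant, since the paper's proof of Theorem~\ref{thm:main} (Section~8.2) already propagates the bound (\ref{bound-energy2}) internally when the non-resonance condition (\ref{num-nonres}) holds.
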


\begin{remark}
If $\sigma(h\omega_j)>0$ for $j=1,\dots,\ell$, the 
modified oscillatory energy $H_\bfomega^*$ 
is the oscillatory energy $H_\bfomega$ in transformed variables 
$\widehat{q}_j = \sigma(h\omega_j)^{1/2} \, q_j$ 
and $\widehat{p}_j = \sigma(h\omega_j)^{1/2} \, p_j$. 
The method (\ref{trigo}) in these variables is still 
of the form (\ref{trigo}) with modified filter functions 
$\widehat{\psi}(h\omega_j) = \sigma(h\omega_j)^{1/2} \, \psi(h\omega_j)$ 
and $\widehat{\phi}(h\omega_j) = \sigma(h\omega_j)^{-1/2} \, \phi(h\omega_j)$. 
The method is thus symplectic in the transformed variables. 
This indicates
why the modified oscillatory energy instead 
of the oscillatory energy shows up in Theorem~\ref{thm:main}.
\end{remark}

\subsection{Energy conservation for modified trigonometric integrators}

We have the following result for the symplectic class of methods (\ref{trigo-alpha}),
which contains the St\"ormer--Verlet scheme and the IMEX integrator
of \cite{zhang97cis} and \cite{stern08iev} as special cases. Here we introduce
\[
H_{\widetilde\bfomega}^*(\bfp ,\bfq )  = 
  \sum_{j=1}^\ell \sigma (h\widetilde\omega_j) \sfrac 12 \Bigl( 
 \Bigl( \frac {\chi(h\widetilde\omega_j)} {\sincws(h\widetilde\omega_j)}\Bigr)^2 |p_{j}|^2 + \widetilde \omega_{j}^2 \,|q_{j}|^2\Bigr) .
\]

For this class of methods, the non-resonance condition (\ref{eq-nonres}) as well as the condition on $\sigma$ of Assumption~B are satisfied under a step size restriction.

\begin{theorem}\label{thm:main-alpha}
We consider the method (\ref{trigo-alpha}). If $\alpha \ge 1/4$, we assume that the step size
is restricted by $h\omega_j \le \text{const}$ for $j=1,\ldots ,\ell$, 
and if $\alpha<1/4$, we assume that $h\omega_j\le 2\theta/\sqrt{1-4\alpha}$ 
for $j=1,\dots,\ell$ with $\theta<1$.
We fix an arbitrary integer $N\ge 1$ and $0< \delta \le 1/4$. Then there exists
$h_0 >0$ such that,
under Assumption A without condition (\ref{eq-nonres}),
the numerical solution obtained by method
(\ref{trigo-alpha}) satisfies, for $h\le h_0$ under the above step size restriction,
\[
H_{\rm slow}(\bfp_n , \bfq_n ) = H_{\rm slow}(\bfp_0 , \bfq_0 ) + \bigo (h^{1-\delta}) 
\quad \hbox{for } \quad 0 \le nh \le h^{-N} ,
\]
as long as $H(\bfp_n,\bfq_n) \le \hbox{Const.}$
If, in addition, the step size $h$ and the frequencies $\widetilde\omega_j$ satisfy the numerical non-resonance condition (\ref{num-nonres}),
then we further have, with $\kappa$ from (\ref{eq-nonres}),
\[
H_{\widetilde\bfomega}^*(\bfp_n , \bfq_n ) = H_{\widetilde\bfomega}^*(\bfp_0 , \bfq_0 ) + 
\bigo (h^{1-\delta}/\kappa )
\quad \hbox{for } \quad 0 \le nh \le h^{-N} . 
\]
The constants symbolized by $\bigo$ are independent of $n$, $h$, $\varepsilon$,
and $\omega_j$, but depend on $\ell$, $N$, $\delta$, $\theta$ and the constants in 
Assumption A. The threshold
$h_0$ is independent of the frequencies $\omega_j$.
\end{theorem}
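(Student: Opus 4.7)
\textbf{Proof plan for Theorem~\ref{thm:main-alpha}.}
The plan is to reduce the statement directly to Theorem~\ref{thm:main} by converting the method (\ref{trigo-alpha}) into an instance of (\ref{trigo}) with the modified frequencies $\widetilde\omega_j$ of Section~\ref{sect:mod-trigo}. As recalled there, (\ref{trigo-alpha}) is the same as (\ref{trigo-mod}) with $\widetilde\omega_j$ defined by $\sin(h\widetilde\omega_j/2)=(h\omega_j/2)/\sqrt{1+\alpha h^2\omega_j^2}$ and with filter functions $\phi(\xi)=1$ and $\psi(\xi)=\chi(\xi)=1-4\alpha\sin^2(\xi/2)$. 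I then introduce rescaled momenta
\[
\widehat\bfp_n := \bigl(\chi(h\widetilde\bfOmega)/\sincws(h\widetilde\bfOmega)\bigr)\,\bfp_n.
\]
Under this rescaling the second equation of (\ref{trigo-mod}) becomes $2h\sincws(h\widetilde\bfOmega)\widehat\bfp_n = \bfq_{n+1}-\bfq_{n-1}$, so that $(\bfq_n,\widehat\bfp_n)$ is precisely a trajectory of scheme (\ref{trigo}) with $\widetilde\omega_j$ in place of $\omega_j$ and with the filter functions $\phi,\psi$ above. Since $\widetilde\omega_0=0$ one has $\widehat\bfp_{n,0}=\bfp_{n,0}$, whence $H_{\rm slow}(\widehat\bfp_n,\bfq_n) = H_{\rm slow}(\bfp_n,\bfq_n)$; and the modified oscillatory energy (\ref{mod-energies}) built from the frequencies $\widetilde\omega_j$ and evaluated at $(\widehat\bfp_n,\bfq_n)$ coincides, by direct substitution, with $H^*_{\widetilde\bfomega}(\bfp_n,\bfq_n)$ as defined in the paper just before the theorem.

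The remaining task is to verify that the transformed integrator satisfies Assumptions~A and~B. The step-size restriction is what makes this possible: with $x=h\omega_j$, the function $\sin(h\widetilde\omega_j/2) = (x/2)/\sqrt{1+\alpha x^2}$ is monotone increasing in $x\ge 0$, and by hypothesis $c_0\le x\le M$, where $M$ is the given upper bound on $h\omega_j$ (with $M<2/\sqrt{1-4\alpha}$ strictly when $\alpha<1/4$). Consequently $h\widetilde\omega_j$ is confined to a compact subinterval $[c_1,c_2]\subset(0,\pi)$ with endpoints depending only on $\alpha,\theta,c_0$. Therefore $|\sin(h\widetilde\omega_j)|\ge\kappa_0>0$ uniformly, which covers (\ref{eq-nonres}) with $\kappa=\kappa_0\ge\sqrt h$ once $h\le h_0$ is small enough. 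A short computation from the definition of $\widetilde\omega_j$ gives $\psi(h\widetilde\omega_j)=1/(1+\alpha h^2\omega_j^2)>0$, hence
\[
\sigma(h\widetilde\omega_j)=\sincws(h\widetilde\omega_j)\,(1+\alpha h^2\omega_j^2),
\]
which lies between two positive constants depending only on $\alpha,\theta,c_0$, verifying Assumption~B (with $\sigma>0$, so no sign flip is needed). The remaining items of Assumption~A transfer directly: the condition $\widetilde\bfPhi\bfq_n\in K_{\rho/2}$ reduces to $\bfq_n\in K_{\rho/2}$ since $\phi=1$, and boundedness of $H(\bfp_n,\bfq_n)$ transfers to boundedness of $H(\widehat\bfp_n,\bfq_n)$ because $\chi/\sincws$ is bounded between positive constants on $[c_1,c_2]$.

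With the assumptions and the non-resonance condition (\ref{num-nonres}) on the $\widetilde\omega_j$ in place, Theorem~\ref{thm:main} applied to the recursion satisfied by $(\bfq_n,\widehat\bfp_n)$ yields the two bounds for $H_{\rm slow}(\widehat\bfp_n,\bfq_n)$ and $H^*_{\widetilde\bfomega}(\widehat\bfp_n,\bfq_n)$, which by the identifications of the previous paragraph are exactly the two conclusions of Theorem~\ref{thm:main-alpha}. The main obstacle I anticipate is the bookkeeping of uniformity: one must check that all the constants entering Assumptions~A and B for the transformed problem, and hence the threshold $h_0$ inherited from Theorem~\ref{thm:main}, depend only on $\alpha,\theta,\ell,N,\delta$ and the constants of Assumption~A, but not on the individual $\omega_j$. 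This is precisely what the explicit monotone formula for $\widetilde\omega_j$ in terms of $h\omega_j$ delivers, since it converts the one-sided lower bound $h\omega_j\ge c_0$ and the upper step restriction into $\omega_j$-independent bounds on $h\widetilde\omega_j$. No new modulated Fourier expansion beyond the one built for Theorem~\ref{thm:main} is needed.
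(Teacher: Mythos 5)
Your proposal is correct and follows essentially the same route as the paper: reduce to Theorem~\ref{thm:main} via the modified frequencies $\widetilde\omega_j$ and the rescaled momenta $\bigl(\chi(h\widetilde\omega_j)/\sincws(h\widetilde\omega_j)\bigr)p_j$, then check that the step-size restriction confines $h\widetilde\omega_j$ to a compact subset of $(0,\pi)$ so that condition~(\ref{eq-nonres}), Assumption~B, and the bounded-energy condition all hold uniformly in the transformed variables. Your explicit computation $\psi(h\widetilde\omega_j)=1/(1+\alpha h^2\omega_j^2)$ and the verification that $H_{\rm slow}$ and $H^*_{\widetilde\bfomega}$ are exactly reproduced under the transformation fill in details the paper leaves implicit, but the argument is the same.
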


Theorem~\ref{thm:main-alpha} follows from Theorem~\ref{thm:main},
using the frequencies $\widetilde\omega_j$ instead of $\omega_j$ and the transformed momenta $\widetilde p_j= \bigl(\chi (h\widetilde\omega_j )/\sinc(h\widetilde\omega_j ) \bigr) p_j$: Since $h\widetilde\omega_j \in [0,\pi]$, condition (\ref{eq-nonres}) of Assumption~A is satisfied with some $\kappa$ independent of $h$ under the step size restriction of Theorem~\ref{thm:main-alpha}. Concerning Assumption B, the condition on $\sigma$ is satisfied under the step size restriction of Theorem~\ref{thm:main-alpha}. This also implies that $|\chi(\xi)/\sincws(\xi)| = |\psi(\xi)/\sincws(\xi)| = |\phi(\xi)/\sigma(\xi)|$ is bounded for $\xi=h\widetilde\omega_j$, $j=1,\dots,\ell$. The step size restriction thus further ensures that the bounded energy condition~(\ref{bound-energy}) of Assumption~A is also satisfied in the transformed variables and with the numerical frequencies $\widetilde{\omega}_j$ instead of $\omega_j$.

\begin{remark}
A result like Corollary \ref{cor:H} is not valid for the class of symplectic methods
(\ref{trigo-alpha}), because $H_\bfomega$ and $H_{\widetilde\bfomega}^*$ differ by
terms of size $\bigo (1)$; see also \cite{hairer00ecb} and \cite[Section XIII.8]{hairer06gni} for the St\"ormer--Verlet method. 
For the IMEX method ($\alpha = 1/4$) one obtains 
\[
H_{\widetilde\bfomega}^*(\bfp ,\bfq )  = 
  \sum_{j=1}^\ell \frac{\widetilde\omega_j}{\omega_j} \sfrac 12
 \Bigl(  |p_{j}|^2 +  \omega_{j}^2 \,|q_{j}|^2\Bigr) .
\]
Therefore, Corollary~\ref{cor:H} is valid for the IMEX scheme for the
special case of a single high frequency.
\end{remark}

\section{Numerical experiments under numerical resonances}
\label{sect:numexp}

The following experiments illustrate the numerical energy behaviour in
situations, for which condition (\ref{num-nonres}) in Theorem
\ref{thm:main} is only fulfilled for small values of $N$,
so that the time intervals $0\le nh\le h^{-N}$ in Theorems~\ref{thm:main}
and \ref{thm:main-alpha} are {\it not} very long.
It will be observed
that {\it numerical resonances} then play
a significant role in the preservation of the total oscillatory energy
on longer time intervals.

Unless otherwise stated
we use the symplectic trigonometric integrator (\ref{trigo}) with filter functions
\cite{deuflhard79aso}
\begin{equation}\label{deuflhard}
\phi (\xi ) =1 , \qquad \psi (\xi ) = \sinc (\xi ).
\end{equation}

\bigskip\it\noindent
Experiment 1. Problem with one degree of freedom. \rm
We consider the scalar differential equation
\[
\ddot q + \omega^2 q = -\nabla U(q), \qquad U(q) = q^3 + q^4
\]
with initial values $q(0)=0.1\,\omega^{-1}$ and $\dot q (0)=1$.
Figure~\ref{fig:simpleprob} shows the deviation of the
oscillatory energy as a function of time on the interval $[0,t_{end}]$.
The two pictures
correspond to the cases $h\omega = 2\pi /3$ and $h\omega =\pi /2$,
each one for seven different
values of $\omega = \eps^{-1}$ (large deviations correspond to smaller values of~$\omega$).
The different frequencies are chosen so that for fixed time $t$ the difference of
two consecutive deviations is nearly constant. This allows us to guess that
the dominant term in the deviation behaves like $\bigo ( t \varepsilon^2 )$
for $h\omega = 2\pi/3$, and like $\bigo (t\varepsilon^3 )$ for $h\omega = \pi /2$.
We note that condition (\ref{num-nonres}) is satisfied with $N=1$ for $h\omega =2\pi/3$ and
with $N=2$ for $h\omega = \pi/2$. The near-preservation of the oscillatory energy
on intervals of length $\bigo (h^{-N})$, stated in Corollary 3.1, can be observed
in the numerical experiment.

\begin{figure}[t]
 \begin{picture}(0,0)
  \epsfig{file=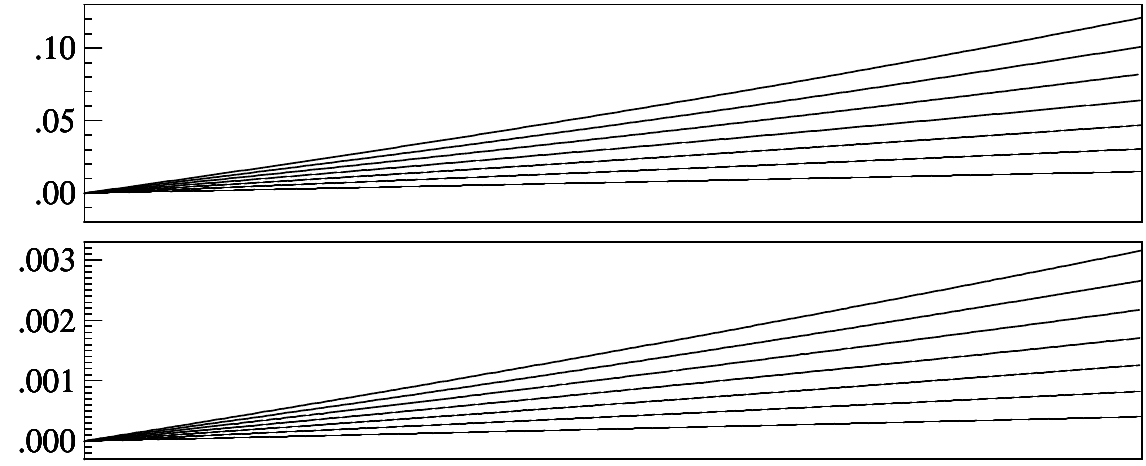}
 \end{picture}%
\begin{picture}(331.4,133.6)( -24.2, 462.4)
  \GGGput[    0,  180]( 98.77,594.66){$t_{end}=200$}
  \GGGput[    0,  150]( 31.80,594.66){$ h\omega = 2\pi/3,$}
  \GGGput[    0,  180]( 98.77,526.37){$t_{end}=2\,000$}
  \GGGput[    0,  150]( 31.80,526.37){$ h\omega = \pi/2,$}
 \end{picture}
\caption{Problem with one degree of freedom:
deviation of the numerical oscillatory energy as a function of time $t$.
In the upper picture the curves correspond to $\omega = 100\cdot \ell^{-1/2}$, in the
lower picture to $\omega = 100\cdot \ell^{-1/3}$, in each case for
$\ell = 1,2,\ldots, 7$.}\label{fig:simpleprob}
\end{figure}

\bigskip\it\noindent
Experiment 2. Alternating stiff and soft springs. \rm
We consider the motion of alternating stiff harmonic and soft nonlinear springs as 
discussed in \cite[Section~I.5 and Chapter XIII]{hairer06gni}. The corresponding
differential equation is of the form treated in this article with only one
high frequency~$\omega= \varepsilon^{-1}$. Along the exact solution of the problem,
the oscillatory energy satisfies $H_\bfomega (\bfp ,\bfq ) = H_\bfomega (\bfp_0 ,\bfq_0 )
+ \bigo (\varepsilon )$ on exponentially long time intervals.
We apply various trigonometric integrators to this problem, and we are mainly
interested in using step sizes for which $h\omega \approx 2\pi r/k$ with integer
values for~$r$ and~$k$. 

Figure~\ref{fig:fpu_ho_ttt} shows the deviation of the
oscillatory energy for method (\ref{deuflhard}), applied to the problem
with $\omega =50$ and step size according to $h\omega = 2\pi/3$.
We observe that the deviation
of the oscillatory energy is of size $\bigo (\varepsilon )$ on an interval of length
$\bigo (\varepsilon^{-2})$. On longer time intervals the deviation behaves like a
random walk. This is illustrated by computing
trajectories with slightly perturbed initial values. Repeating the experiment
with other values of $\omega$, one finds that the deviation of the numerical
oscillatory energy
behaves like $\bigo (\varepsilon ) + \bigo (\varepsilon^2 \sqrt t )$.
Such a random walk behaviour has already been observed with 
computations by the simplified Takahashi--Imada method
\cite{hairer09oec}.
A similar experiment with $h\omega = \pi /2$ leads to a
$\bigo (\varepsilon ) + \bigo (\varepsilon^3 \sqrt t )$ behaviour. 
Figure~\ref{fig:fpu_ho_loop} shows the maximum deviation of the
numerical oscillatory energy until a fixed time $t=100\,000$ as a function
of $h\omega$.

\begin{figure}[t]
 \begin{picture}(0,0)
  \epsfig{file=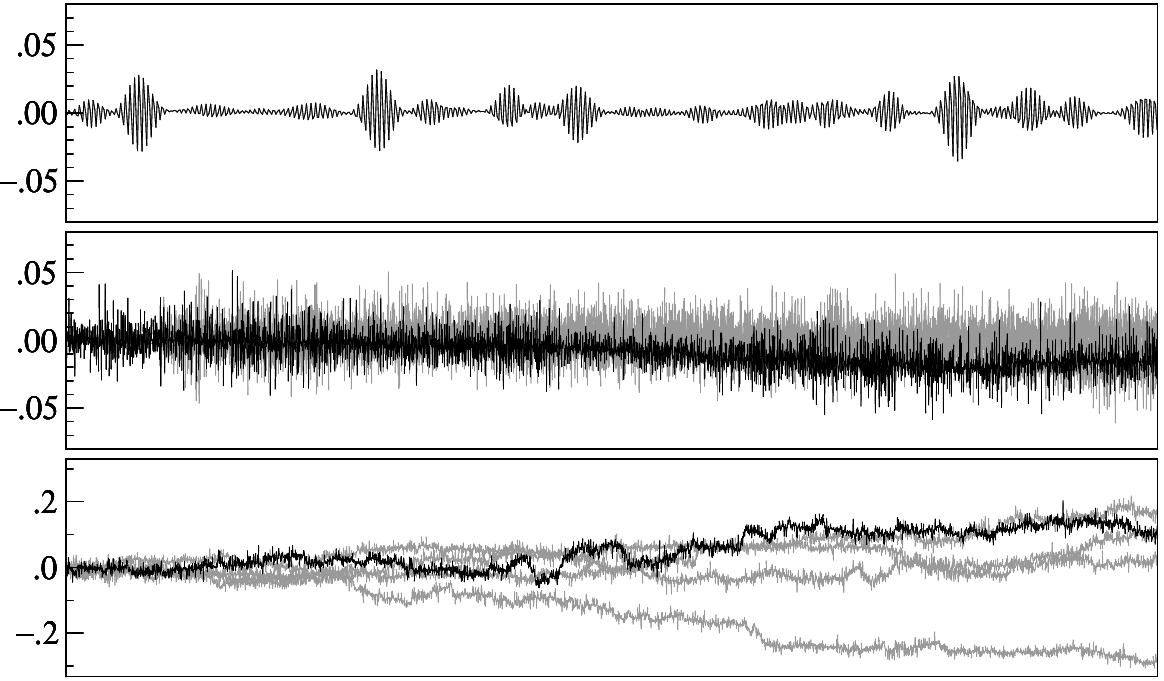}
 \end{picture}%
\begin{picture}(335.6,196.2)( -19.9, 204.9)
  \GGGput[    0,  -60]( 31.44,337.14){$t_{end}=30$}
  \GGGput[    0,  -60]( 31.44,271.74){$t_{end}=3\,000$}
  \GGGput[    0,  -60]( 31.44,206.21){$t_{end}=300\,000$}
 \end{picture}
\caption{Alternating stiff and soft springs:
deviation of the numerical oscillatory energy as a function of time $t$. We consider
the method (\ref{deuflhard}) with
$\omega =50$, $h\omega = 2\pi /3$, and initial values from \cite{hairer06gni}
(black curve). The gray curves are obtained when one of the
initial values is perturbed to $1 \pm \delta$ with $|\delta | \le 7\cdot 10^{-16}$.}\label{fig:fpu_ho_ttt}
\end{figure}

For the modified trigonometric integrators (\ref{trigo-mod}) we expect similar
results, because they can be interpreted as trigonometric integrators
with modified frequencies. We apply the IMEX integrator (\ref{trigo-alpha})
with $\alpha =0.25$ to the problem with alternating stiff and soft springs,
where we take $\omega = 25$. In Figure~\ref{fig:fpu_ho_imex} we
present the results for two different step sizes:
$h\omega = \sqrt{12}$ and
$h\omega = 2$ which, by the relation of Section~\ref{sect:mod-trigo},
correspond to $h\widetilde\omega = 2\pi /3$ and $h\widetilde\omega = \pi /2$,
respectively. Similar as for the trigonometric method we observe
a random walk behaviour: $\bigo (\varepsilon ) + \bigo (\varepsilon^2 \sqrt t )$
for $h\omega = \sqrt{12}$, and
$\bigo (\varepsilon ) + \bigo (\varepsilon^3 \sqrt t )$
for $h\omega = 2$. The additional factor $\varepsilon$ in the second
experiment can be guessed from the figures, because a similar
quantitative behaviour is observed on an interval that is $\omega^2$ times longer.

\begin{figure}[t]
 \begin{picture}(0,0)
  \epsfig{file=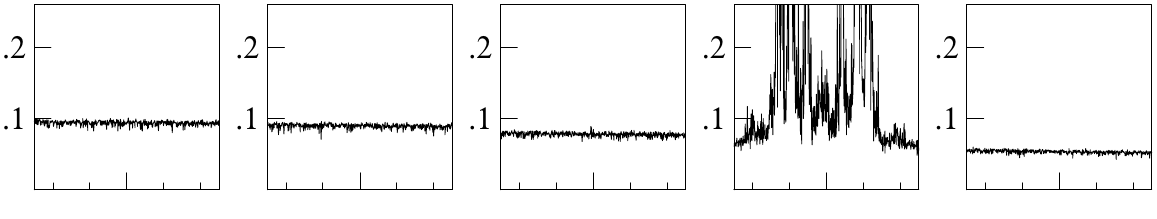}
 \end{picture}%
\begin{picture}(334.5, 63.0)(  -4.5, 259.0)
  \GGGput[   50,  115]( 33.49,267.40){$\pi/4$}
  \GGGput[   50,  115](100.70,267.40){$\pi/3$}
  \GGGput[   50,  115](167.79,267.40){$\pi/2$}
  \GGGput[   50,  115](235.00,267.40){$2\pi/3$}
  \GGGput[   50,  115](302.09,267.40){$3\pi/4$}
 \end{picture}
\caption{Alternating stiff and soft springs:
maximum deviation of the numerical
oscillatory energy on a time interval of length $100\,000$
as a function of $h\omega$. The value of $\omega$ is fixed. Each picture shows this deviation
on an equidistant grid ($591$ points) of an $h\omega$-interval of length~$0.1$.}
\label{fig:fpu_ho_loop}
\end{figure}

\begin{figure}[t]
 \begin{picture}(0,0)
  \epsfig{file=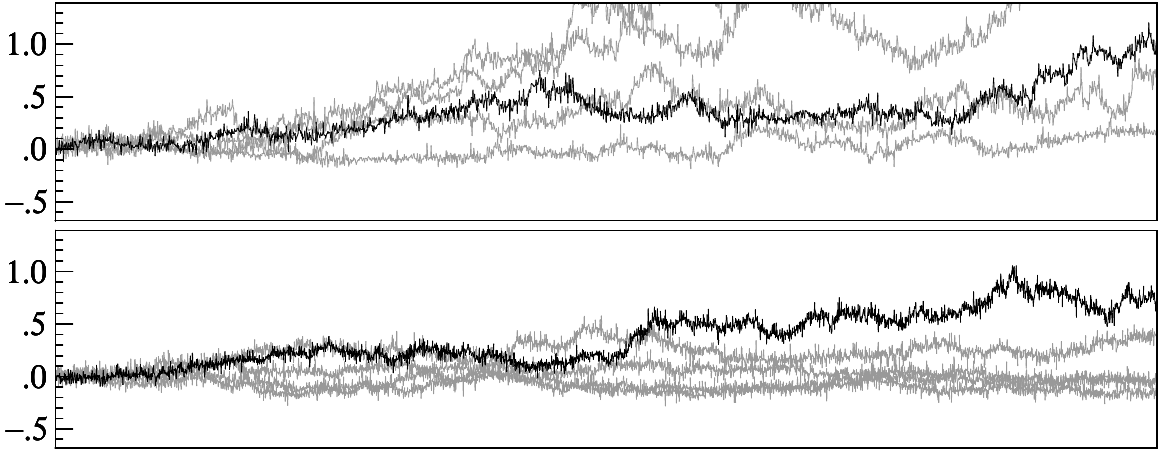}
 \end{picture}%
\begin{picture}(335.5,130.7)( -17.0, 466.7)
  \GGGput[    0,  -60]( 31.68,533.47){$t_{end}=300\,000$}
  \GGGput[    0,  140]( 31.68,596.11){$\omega = 25,~~ h\omega = \sqrt{12}$}
  \GGGput[    0,  -60]( 31.68,468.07){$t_{end}=300\,000\cdot 25^2$}
  \GGGput[    0,  160]( 31.68,530.58){$\omega = 25,~~ h\omega = 2$}
 \end{picture}
\caption{Same experiment as for Figure~\ref{fig:fpu_ho_ttt}, but with the
IMEX method (\ref{trigo-alpha}), $\alpha = 1/4$, and with $\omega = 25$.
The upper picture corresponds to a step size
such that $h\omega =\sqrt{12}$,
i.e., $h\widetilde\omega = 2\pi/3$, the lower picture
such that $h\omega =2$,
i.e., $h\widetilde\omega = \pi/2$.}\label{fig:fpu_ho_imex}
\end{figure}

\bigskip\it\noindent
Experiment 3. Multi-frequency example. \rm
We consider the oscillatory differential equation (\ref{odec}) with $\ell =2$,
$\omega_1 = \omega$, $\omega_2 = \sqrt 2 \,\omega$, and quadratic potential
\[
 U(\bfq ) = 0.01\,q_1\, q_2 .
\]
We apply two trigonometric methods with initial values
$\bfq (0) = (0, 0.3\varepsilon, 0.8\varepsilon )$,
$\dot \bfq (0) = (0, 0.6, 0.7)$ with $\varepsilon = \omega^{-1}$
and step size
$h= 2\pi /(\omega_1 + \omega_2 )$.
The result can be seen in Figure~\ref{fig:desgaut}.
For the symplectic method (B), with filter functions given by
(\ref{deuflhard}), we observe a linear growth
$\bigo (t\varepsilon )$ in the numerical oscillatory
energy, which soon turns into a quadratic growth.
For method (A)  of  \cite[p.\,481]{hairer06gni} (the non-symplectic Gautschi method), with filter functions $\phi (\xi )=1$ and $\psi (\xi ) = \sinc ^2 (\xi /2 )$,
the deviation in the oscillatory
energy is much smaller.

\begin{figure}[t]
 \begin{picture}(0,0)
  \epsfig{file=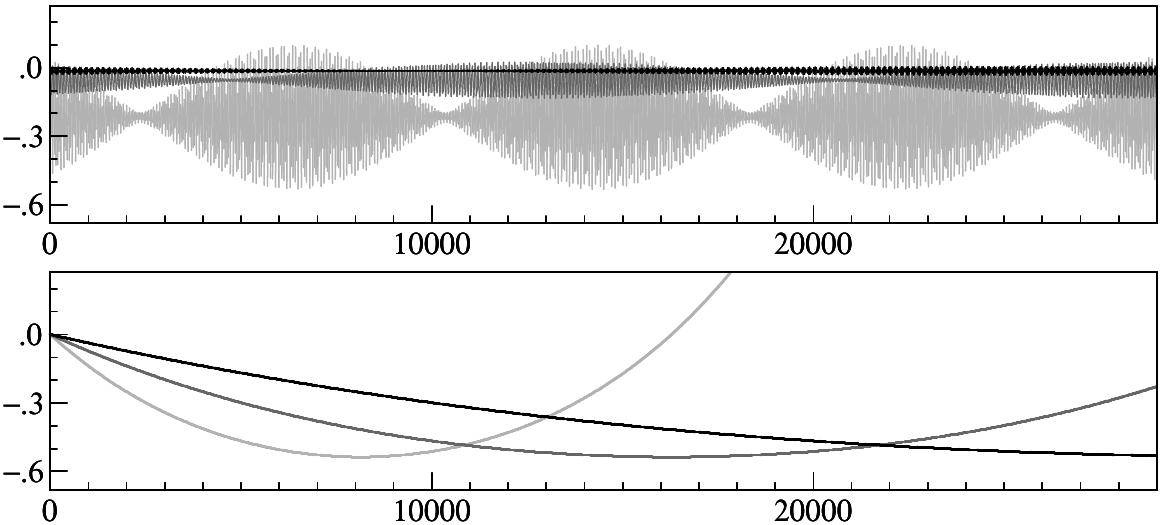}
 \end{picture}%
\begin{picture}(335.6,150.6)(  -7.1, 441.1)
  \GGGput[    0,  100]( 11.68,586.59){$\times \,6\cdot 10^{-5}$}
  \GGGput[    0,  100]( 88.17,586.59){method (A)}
  \GGGput[    0,  100]( 88.17,509.74){method (B)}
 \end{picture}
\caption{Multi-frequency example:
deviation of the oscillatory energy as a function of time $t$ for two trigonometric
methods, $\omega = 200$ (black), $\omega =100$ and $\omega =50$
in gray.}\label{fig:desgaut}
\end{figure}

\section{Numerical gap condition for modified frequencies}
\label{sect:gap}

In order to decide whether a linear combination of frequencies 
is almost-resonant or non-resonant, we will use a gap in the linear 
combinations of frequencies. 
Similarly as in \cite[Sect.~3.1]{gauckler13esi} we construct new
frequencies that satisfy a non-resonance condition outside a resonance module.

For finite $\ell$ we can detect
a gap in the values of $\sin(\frac h2\,  \bfk\cdot\bfomega)$ with small $\|\bfk\|$.

\begin{lemma}\label{lemma-gap}
We fix an arbitrary integer $N\ge 1$ and $0< \delta \le 1/4$.
There exists $0<\mu \le\delta/2$ depending on $N$, $\ell$, and $\delta$,
such that for all $0< h <1$ there
exists $\alpha$ with $\delta/2\le \alpha\le \delta$ depending on the frequencies $\omega_j$
and the step size $h$ such that the set
\[
\Bigl\{\, \Bigl| \sin\Bigl(\sfrac h2\,  \bfk\cdot\bfomega\Bigr) \Bigr| : \|\bfk\|\le N+1 \,\Bigr\}
\]
contains no element in the interval $[h^{1-\alpha+\mu},h^{1-\alpha-\mu}]$.
\end{lemma}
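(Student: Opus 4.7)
The plan is to prove Lemma \ref{lemma-gap} by a pigeonhole (measure) argument on the exponent scale. The crucial finite quantity is the cardinality $M=M(N,\ell)$ of the set $\{\bfk\in\Z^\ell:\|\bfk\|\le N+1\}$, which is bounded, e.g., by $(2N+3)^\ell$. Consequently the set
\[
S_h \;=\; \Bigl\{\, \bigl| \sin\bigl(\sfrac h2\,\bfk\cdot\bfomega\bigr) \bigr| \;:\; \|\bfk\|\le N+1 \,\Bigr\}\subset[0,1]
\]
has at most $M$ elements, regardless of the choice of $\bfomega$ and $h$.

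First I would translate the problem to the exponent scale. For $h\in(0,1)$ and $v\in(0,1]$ define $\beta(v)=\log v/\log h\ge 0$, so that $v=h^{\beta(v)}$. For any $\alpha\in[\delta/2,\delta]$ and $\mu>0$, the value $v$ lies in $[h^{1-\alpha+\mu},h^{1-\alpha-\mu}]$ if and only if $|\beta(v)-(1-\alpha)|\le\mu$, i.e.\ if and only if $\alpha$ lies in the interval $[1-\beta(v)-\mu,\,1-\beta(v)+\mu]$ of length $2\mu$. Values $v=0$ (arising when $\frac h2\,\bfk\cdot\bfomega$ is an integer multiple of $\pi$) impose no constraint, since $0<h^{1-\alpha+\mu}$.

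Next I would apply the measure/pigeonhole step. Choose
\[
\mu \;=\; \frac{\delta}{5\,M(N,\ell)},
\]
which clearly satisfies $\mu\le\delta/2$ and depends only on $N$, $\ell$, and $\delta$. The set of ``forbidden'' $\alpha\in[\delta/2,\delta]$, namely those for which some $v\in S_h\setminus\{0\}$ lies in $[h^{1-\alpha+\mu},h^{1-\alpha-\mu}]$, is a union of at most $M$ intervals of length $2\mu$, and hence has total Lebesgue measure at most
\[
2\mu M \;=\; \tfrac{2}{5}\,\delta \;<\; \tfrac{\delta}{2}
\;=\; \bigl|[\delta/2,\delta]\bigr|.
\]
Therefore the complement of the forbidden set in $[\delta/2,\delta]$ is nonempty, and any $\alpha$ in this complement has the desired property.

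The proof is essentially self-contained once one realises that only the cardinality $M$ matters: no Diophantine properties of $\bfomega$ are used, which is precisely the strength of the lemma and is what allows the subsequent analysis to be uniform in the frequencies. The only mildly delicate point, and the step I would double-check carefully, is the direction of the inequalities when passing between $v$ and $\beta(v)$ (since $h<1$ makes $\log h<0$, so that larger $\beta$ corresponds to smaller $h^\beta$); beyond this, the argument is a straightforward counting estimate and there is no substantive obstacle.
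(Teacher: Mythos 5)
Your proof is correct and is essentially the paper's own argument: the paper's proof consists of exactly this pigeonhole observation (the set has at most $M=M(N,\ell)$ elements, so some subinterval of exponents of width $2\mu$ is missed), stated without the details you supply. Your translation to the exponent scale, the handling of the values $0$, and the measure count $2\mu M<\delta/2$ are all correct and simply make the paper's one-line argument explicit.
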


\begin{proof}
This set contains at most $M=\ell^{N+1}$ elements.
Therefore, there exist
$\alpha$ with $\delta/2\le \alpha\le \delta$ depending on the frequencies and the time step size and $0<\mu\le\delta/2 $ depending only on $N$, $\ell$,
and $\delta$, such that this set contains no element in the interval $[h^{1-\alpha+\mu},h^{1-\alpha-\mu}]$.
\qed
\end{proof}

We use this gap to introduce modified frequencies $\varpi_1,\dots,\varpi_\ell$ for
which near-resonant linear combinations, i.e., those taking a value below the gap,
become exactly resonant.
It is convenient to use the notation $\varpi_0 = \omega_0 = 0$.

\begin{lemma}\label{prop-modifiedfrequencies}
Let $N$, $\delta$, and $\mu$ be as in Lemma~\ref{lemma-gap}. Then there exists
$h_0 < 1$ such that for fixed $0<h\le h_0$ satisfying
condition~\eqref{eq-nonres} the following holds:
there
exist modified frequencies $\bfvarpi = (\varpi_1,\dots,\varpi_\ell )$ and a $\Z$-module $\calM\le\Z^d$ such that, with $\alpha$ from Lemma~\ref{lemma-gap},
\begin{align}
\sin\Bigl( \sfrac h2 \,\bfk\cdot\bfvarpi \Bigr) &= 0 
&&\text{for~~$\bfk\in\calM$},\label{eq-lczero}\\
\Big|\sin\Bigl( \sfrac h2\, \bfk\cdot\bfvarpi \Bigr)\Big| &\ge \sfrac{1}{2} h^{1-\alpha-\mu} 
&&\text{for~~$\bfk\notin\calM$~ with ~$\|\bfk\|\le N+1$}.\label{eq-lcbound}
\end{align}
The module $\calM$ does neither contain the unit vectors $\jvec=(0,\dots,1,\dots,0)$ nor their doubles $2\jvec$; if, in addition, the numerical non-resonance condition (\ref{num-nonres}) holds, then the module contains only those $\bfk\in\mathbb{Z}^d$ with $\|\bfk\|\le N+1$ that satisfy $\bfk\cdot\bfvarpi=0$. Moreover, there exists $\gamma>0$ depending only on $N$ and $\ell$ such that
\begin{equation}\label{eq-diff-om-omt}
|\varpi_j-\omega_j| \le \gamma h^{-\alpha+\mu} \qquad\text{for\quad$j=1,\dots,\ell$}.
\end{equation}
\end{lemma}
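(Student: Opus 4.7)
The plan is to follow the strategy of \cite[Sect.~3.1]{gauckler13esi} but at the numerical (discrete) level. Set
$\calK=\{\bfk\in\Z^\ell : \|\bfk\|\le N+1,\ |\sin(\tfrac h2\bfk\cdot\bfomega)|<h^{1-\alpha+\mu}\}$,
the ``near-resonant'' indices lying below the Lemma~\ref{lemma-gap} gap, and let $\calM$ be the $\Z$-module they generate. Any $\bfk$ with $\|\bfk\|\le N+1$ outside $\calK$ lies above the gap, $|\sin(\tfrac h2\bfk\cdot\bfomega)|\ge h^{1-\alpha-\mu}$, and hence among indices of norm $\le N+1$ cannot belong to $\calM$.

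For each $\bfk\in\calK$ I would take $m_\bfk$ to be the integer nearest to $\frac{h\bfk\cdot\bfomega}{2\pi}$; the gap bound on $\sin$ gives $|\tfrac h2\bfk\cdot\bfomega - m_\bfk\pi|\le h^{1-\alpha+\mu}$, hence $|\bfk\cdot\bfomega - 2\pi m_\bfk/h|\le 2 h^{-\alpha+\mu}$. The map $\bfk\mapsto m_\bfk$ extends to a $\Z$-linear map on $\calM$: the $\Z$-module of integer relations among $\calK$ is generated by finitely many relations with bounded coefficients, and for each such relation $\sum_i e_i \bfk_i=0$ one has $|\sum_i e_i m_{\bfk_i}|\le \sum_i|e_i|\cdot h^{1-\alpha+\mu}/\pi<1$ for small $h$, hence $=0$. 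I then pick a $\Z$-basis $\bfk^{(1)},\dots,\bfk^{(s)}$ of $\calM$ of bounded norm (by standard lattice reduction), and define $\bfvarpi$ as the unique vector with $\bfvarpi-\bfomega\in\calM\otimes\real$ satisfying $\bfk^{(i)}\cdot\bfvarpi = 2\pi m_{\bfk^{(i)}}/h$ for all $i$. The corresponding linear system has an integer coefficient matrix with entries bounded by $C(\ell,N)$; selecting the best-conditioned basis among the finitely many bounded-entry integer bases gives an inverse with entries depending only on $\ell$ and $N$, and combined with the right-hand-side bound $|\bfk^{(i)}\cdot\bfomega-2\pi m_{\bfk^{(i)}}/h|=O(h^{-\alpha+\mu})$ yields~\eqref{eq-diff-om-omt}.

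The remaining properties follow quickly. Property~\eqref{eq-lczero} holds by linearity: $\bfk=\sum_i n_i\bfk^{(i)}\in\calM$ gives $\bfk\cdot\bfvarpi\in(2\pi/h)\Z$, whence $\sin(\tfrac h2\bfk\cdot\bfvarpi)=0$. For~\eqref{eq-lcbound}, any $\bfk\notin\calM$ with $\|\bfk\|\le N+1$ satisfies $|\sin(\tfrac h2\bfk\cdot\bfomega)|\ge h^{1-\alpha-\mu}$ by the gap, and Lipschitz continuity of $\sin$ gives $|\sin(\tfrac h2\bfk\cdot\bfvarpi)-\sin(\tfrac h2\bfk\cdot\bfomega)|\le \tfrac h2(N+1)\gamma h^{-\alpha+\mu}$, which is $\le \tfrac12 h^{1-\alpha-\mu}$ once $h^{2\mu}$ is small enough. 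To exclude $\jvec\in\calM$: otherwise $\varpi_j\in(2\pi/h)\Z$, yet~\eqref{eq-nonres} places $\omega_j$ at distance at least of order $h^{-1/2}$ from $(\pi/h)\Z\supset(2\pi/h)\Z$, incompatible with $|\varpi_j-\omega_j|\le\gamma h^{-\alpha+\mu}$ since $\alpha-\mu\le 1/4<1/2$; the identical argument rules out $2\jvec$. Finally, under~\eqref{num-nonres}, for $\bfk\in\calM$ with $\|\bfk\|\le N+1$ one has $\bfk\cdot\bfvarpi=2\pi m/h$; if $m\neq 0$ then $|h\bfk\cdot\bfomega-2\pi m|=|h\bfk\cdot(\bfomega-\bfvarpi)|\le(N+1)\gamma h^{1-\alpha+\mu}<\sqrt h$ contradicts~\eqref{num-nonres-noamal}, forcing $m=0$ and $\bfk\cdot\bfvarpi=0$.

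The main obstacle will be the frequency-independent bound~\eqref{eq-diff-om-omt}. It rests on two delicate steps: the well-definedness of the $\Z$-linear extension of $m_\bullet$ from $\calK$ to $\calM$, handled by checking only finitely many bounded-coefficient relations, which lets the ``error $<1$, integer'' trick work uniformly; and the existence of a $\Z$-basis of $\calM$ whose coefficient matrix has inverse bounded uniformly in $h$ and $\bfomega$, handled by pigeonholing among the finitely many bounded-entry integer bases. Once those two steps are in place, the rest is a routine finite-dimensional linear-algebraic estimate.
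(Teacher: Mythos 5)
Your construction coincides with the paper's: both generate $\calM$ from the below-gap vectors of norm $\le N+1$, assign to a basis of $\calM$ the nearest integer multiples of $\pi$ using the boundedness of the integer coefficients (justified, as you do, by the finiteness of the possible generating sets), solve a uniformly bounded linear system for the frequency shift $\bfvarpi-\bfomega$, and then obtain \eqref{eq-lczero}--\eqref{eq-diff-om-omt} and the exclusion of $\jvec$ and $2\jvec$ by exactly the arguments you give. The only cosmetic difference is that you extend $\bfk\mapsto m_\bfk$ to a $\Z$-linear map on all of $\calM$ via the relation module, whereas the paper defines the $m_i$ only on the basis vectors and takes the minimal-norm (pseudo-inverse) solution, which automatically lies in $\calM\otimes\real$; the resulting estimates are identical.
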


\begin{proof}
We denote by $\calM$ the $\Z$-module generated by
\begin{equation}\label{setR}
\Bigl\{\, \bfk\in\Z^\ell \, : \, \|\bfk\|\le N+1 \text{ ~and~
$\Bigl|\sin \Bigl(\sfrac h2 \,\bfk\cdot\bfomega \Bigr) \Bigr|
\le h^{1-\alpha+\mu}$} \,\Bigr\}.
\end{equation}
This module is spanned by $d\le \ell$ integer-linearly independent elements $\bfk^1,\ldots,\bfk^d$ (see, for instance, \cite[Chap. III, Theorem~7.1]{lang02alg}). Since the number of modules generated by some subset
$\calR\subseteq\{ \bfk\in \Z^\ell \, : \, \| \bfk \| \le N+1 \}$
depends only on $N$ and~$\ell$, the matrix formed by the
basis vectors as well its pseudo-inverse are bounded by a
constant only depending on $N$ and $\ell$. The same is true
for the coefficients, when the basis vectors are written
as an integer-linear combination of linearly-independent elements of $\calR$.

For the basis vectors
we choose $m_i\in\Z$ such that $|\frac h2 (\bfk^i\cdot\bfomega) - \pi m_i| $
is minimal. Since $\bfk^i $ is a linear combination with integer
coefficients of elements in (\ref{setR}) and since
$|\frac h2 (\bfk^i\cdot\bfomega) - \pi m_i| \le
(\pi /2) \bigl|\sin \bigl( \frac h2 (\bfk^i\cdot\bfomega) \bigr)\bigr|$, it follows from the
addition theorem for sine that
$|\frac h2 (\bfk^i\cdot\bfomega) - \pi m_i| \le \gamma_i h^{1-\alpha+\mu}$ with
constants $\gamma_i$ depending only on $N$ and $\ell$.
We determine $\bfvartheta = (\vartheta_1,\dots,\vartheta_\ell ) \in\real^\ell$ as a solution of minimal norm of 
\begin{equation}\label{eq-linsystem-modfreq}
\bfk^i\cdot\bfomega + \bfk^i\cdot\bfvartheta = \frac{2 \pi m_i}{h}, \qquad i=1,\dots,\ell
\end{equation}
and introduce new frequencies $\bfvarpi = (\varpi_1,\dots,\varpi_\ell )$ as
\[
\varpi_j = \omega_j + \vartheta_j, \qquad j=1,\dots,\ell .
\]
These new frequencies are constructed in such a way that~\eqref{eq-lczero} holds. In addition, the solution $\bfvartheta$ of~\eqref{eq-linsystem-modfreq} is bounded as $\|\bfvartheta\| \le \gamma h^{-\alpha+\mu}$ with $\gamma$ depending only on $N$ and $\ell$,
so that~\eqref{eq-diff-om-omt} holds. Moreover, we have for $\bfk\notin\calM$ with $\|\bfk\|\le N+1$ by the choice of the module $\calM$, Lemma~\ref{lemma-gap} and \eqref{eq-diff-om-omt} 
\[
h^{1-\alpha-\mu} < \Bigl| \sin\Bigl(\sfrac h2 \,\bfk\cdot\bfomega\Bigr) \Bigr| \le 
\Bigl|\sin\Bigl(\sfrac h2 \,\bfk\cdot\bfvarpi\Bigr) \Bigr| + \sfrac\gamma 2 (N+1) h^{1-\alpha+\mu} ,
\]
and hence~\eqref{eq-lcbound} holds if $h$ satisfies
$h^{2\mu} \gamma (N+1) \le 1$.

The condition \eqref{eq-nonres} is used to show that neither the unit vectors $\jvec$ nor their doubles $2\jvec$ are contained in $\calM$. Indeed, if $\jvec\in\calM$ or $2\jvec\in\calM$, then by~\eqref{eq-diff-om-omt} and~\eqref{eq-lczero}
\[
h^{1/2} \le \bigl| \sin (h\omega_j )\bigr| \le
 2 \Bigl| \sin \Bigl( \sfrac h2\, \omega_j \Bigr) \Bigr| \le h |\vartheta_j| \le \gamma h^{1-\alpha+\mu} .
\]
This gives a contradiction if $\gamma h^{1/2 - \alpha + \mu} < 1$.

If, in addition, the numerical non-resonance condition (\ref{num-nonres}) holds, then we have for $\bfk\in\calM$ with $\|\bfk\|\le N+1$ and $\bfk\cdot\bfvarpi\ne 0$ that, by~\eqref{eq-diff-om-omt} and~\eqref{eq-lczero},
\[
h^{1/2} \le \min_{0\ne r\in\mathbb{Z}} \Bigl| \sfrac{h}2 \, \bfk\cdot\bfomega - r\pi \Bigr| \le \Bigl|\sfrac{h}2\, \bfk\cdot\bfvartheta \Bigr| + \min_{0\ne r\in\mathbb{Z}} \Bigl| \sfrac{h}2 \, \bfk\cdot\bfvarpi - r\pi \Bigr| \le \sfrac{\gamma}2 (N+1) \gamma h^{1-\alpha+\mu}.
\]
This gives a contradiction if $(N+1) \gamma h^{1/2 - \alpha + \mu} < 2$.
\qed
\end{proof}

\section{Modulated Fourier expansion}
\label{sect:mfe}

We fix an arbitrary integer $N\ge 1$ and $0<\delta\le 1/4$, and we denote by $\alpha$ and $\mu$ the constants of Lemma \ref{lemma-gap}. For fixed step size $0<h\le h_0$ satisfying \eqref{eq-nonres} and with $h_0$ from Lemma \ref{prop-modifiedfrequencies}, we let $\bfvarpi=(\varpi_1,\dots,\varpi_\ell)$ be the modified frequencies and $\calM$ the $\Z$-module of Lemma \ref{prop-modifiedfrequencies}. 
We denote by $\calN$ a set of representatives of the equivalence classes in
$\Z^\ell/\calM$, which are chosen such that for each $\bfk\in\calN$
the norm of $\bfk$ is minimal in the equivalence class $[\bfk]=\bfk+\calM$,
and with $\bfk\in\calN$, also $-\bfk\in\calN$. We further denote 
$\calK = {\{ \bfk \in \calN \,:\, \| \bfk\| \le N+1 \}}$.

We make the approximation ansatz (modulated Fourier expansion)
\begin{equation}\label{mfe}
\bfq_n \approx \sum_{\bfk\in\calK}
\bfz^\bfk (h^{-\alpha}t)\, \e^{\iu (\bfk\cdot\bfvarpi )t} \qquad \hbox{for}\qquad t=nh ,
\end{equation}
and denote the components of $\bfq_n$ by $q_{n,j}$ and those
of $\bfz^\bfk$ by $z_j^\bfk$ for $j=0,\ldots ,\ell$.
We insert the ansatz (\ref{mfe}) into (\ref{trigo}), expand the
right-hand side into a Taylor series around the smooth
function $\bfPhi\bfz^\bfzero (h^{-\alpha}t)$ and compare the coefficients 
of $\e^{\iu(\bfk\cdot\bfvarpi) t}$. This yields
\begin{equation}\label{mfe-equation}
L_j^\bfk z_j^{\bfk} =  2 \bigl( \cos (h\omega_j ) - \cos (h\varpi_j )\bigr) z_j^\bfk
- h^2 \psi (h\omega_j ) \nabla_j^{-\bfk} \calU (\bfPhi\bfz ) + h^2\delta_j^\bfk,
\end{equation}
where we allow for a small defect $\delta_j^\bfk$. Here,
$\nabla_j^{-\bfk} \calU (\bfx )$ denotes the derivative of
\begin{equation}\label{calU}
\calU (\bfx ) = 
U(\bfx^\bfzero ) 
+ \sum_{m=2}^N
\sum_{j_{1},\ldots ,j_{m}=0}^\ell
\sum_{\bfk^1+\ldots +\bfk^m \in\calM}\frac 1 {m!}\, \partial_{j_1}\ldots\partial_{j_m}
U(\bfx^\bfzero )
\bigl( x_{j_{1}}^{\bfk^1},\ldots ,x_{j_{m}}^{\bfk^m} \bigr)
\end{equation}
with respect to $x_j^{-\bfk}$, where 
the last sum is over multi-indices $\bfk^l \in \calK$ with $\bfk^l \ne \bfzero$ and $\|\bfk^1\| + \ldots + \|\bfk^m \| \le N+1$.
The operator $L_j^\bfk $ in (\ref{mfe-equation}) is given by
\begin{equation}\label{LhD}
\hspace{-1mm}
\begin{array}{rcl}
\bigl( L_j^\bfk z_j^{\bfk} \bigr) (\tau ) &=& \displaystyle
\e^{\iu  (\bfk\cdot\bfvarpi )h} z_j^\bfk (\tau + h^{1-\alpha} ) -
2 \cos (h\varpi_j) z_j^\bfk (\tau ) +
\e^{-\iu  (\bfk\cdot\bfvarpi )h} z_j^\bfk (\tau - h^{1-\alpha} ) \\[2mm]
&=& \displaystyle
4s_{\jvec+\bfk}s_{\jvec-\bfk}z_j^{\bfk}  (\tau ) 
+ 2 \iu s_{2\bfk}h^{1-\alpha}
\dot z_j^{\bfk} (\tau )  + c_{2\bfk}h^{2(1-\alpha)}
\ddot z_j^{\bfk}  (\tau ) + \ldots ~.
\end{array}\hspace{-4mm}
\end{equation}
Here, $s_{\bfk}=\sin (\frac{h}{2}\,{\bfk\cdot\bfvarpi})$ and
$c_{\bfk}=\cos (\frac{h}{2}\,{\bfk\cdot\bfvarpi})$,
and the dots on $z_j^{\bfk}$ represent derivatives with respect to the
scaled time $\tau = h^{-\alpha} t$. The higher order terms are
linear combinations of the $r$th derivative of $z_j^{\bfk}$
(for $r\ge 3$) multiplied by
$h^{r(1-\alpha)}$ and containing one of the factors
$s_{2\bfk}$ or $c_{2\bfk}$.

To get initial values we insert the ansatz (\ref{mfe}) into the
relation $\bfq_0 = \bfq (0)$ and into the second formula of (\ref{trigo}).
This yields the equations
(for $j=0,\dots,\ell$) 
\begin{eqnarray}
q_{0,j} &=& \sum_{\bfk\in\calK}  z_{j}^\bfk (0), \nonumber \\
2h \sinc (h\omega_j )p_{0,j} &=&\sum_{\bfk\in\calK}
\Bigl( z_j^\bfk (h^{1-\alpha }) \e^{\iu (\bfk\cdot\bfvarpi )h} -
 z_j^\bfk  (-h^{1-\alpha }) \e^{-\iu (\bfk\cdot\bfvarpi )h} \Bigr) \label{mod-init}\\
 &=&\sum_{\bfk\in\calK} \Bigl( 2\iu s_{2\bfk} z_j^\bfk (0) +
 2 c_{2\bfk} h^{1-\alpha}\dot z_j^\bfk (0) + 
 \iu s_{2\bfk} h^{2(1-\alpha)}\ddot z_j^\bfk (0)+\ldots \Bigr) .\nonumber
\end{eqnarray}

The Taylor series expansions in the relations (\ref{LhD}) and (\ref{mod-init}) are
truncated after $L\ge (N+3)/(1-\alpha )$ terms, such that the remainder is of size
$\bigo (h^{N+1})$.
Since the following analysis requires estimates for the
modulation functions $z_j^\bfk$ and their derivatives on the interval
$\tau \in [0,1]$ (which corresponds to $t\in [0,h^\alpha ]$), we consider
for functions $\bfz = (z_j^\bfk )$ the norms
\begin{equation}\label{norm}
| z_j^\bfk |_{C^r} =  \max_{0\le \tau \le 1} \max_{0\le l \le r}  \,
\Bigl| \frac{\d^l}{\d\tau^l} z_j^\bfk(\tau) \Bigr| , \qquad
\| \bfz \|_{C^r} = 
\sum_{j=0}^\ell
\sum_{\bfk\in \calK} | z_j^\bfk |_{C^r} .
\end{equation}

\begin{theorem}\label{thm:mfe}
For an arbitrarily fixed $N$ and under Assumptions A and B the numerical
solution $\bfq_n$ of (\ref{trigo})
admits an expansion
\begin{equation}\label{mfe-intheorem}
\bfq_n = \sum_{\bfk\in\calK}
\bfz^\bfk (h^{-\alpha}t)\, \e^{\iu (\bfk\cdot\bfvarpi )t} + \bfr_n\qquad \hbox{for}\qquad t=nh
\le h^\alpha ,
\end{equation}
where the coefficient functions $z_j^\bfk (\tau )$ satisfy
$z_j^{-\bfk} = \overline{z_j^\bfk}$ and are bounded by
\begin{eqnarray}
| z_0^\bfzero |_{C^L} &\le & C\label{estimthm-00} \\
| z_j^{\pm\jvec} |_{C^L} &\le & C \omega_j^{-1} , \qquad j=1,\ldots ,\ell\label{estimthm-jj}\\
| z_j^{\bfzero} |_{C^L} &\le & C h^2 |s_{\jvec}|^{-2}|\psi(h\omega_j)|\varepsilon^{2}  , 
\qquad j=1,\ldots ,\ell\label{estimthm-j0}
\end{eqnarray}
and for all other $(j,\bfk )$ by
\begin{eqnarray}
| z_j^\bfk  |_{C^L} &\le & 
C h^2 |s_{\jvec +\bfk} s_{\jvec -\bfk}|^{-1}|\psi(h\omega_j)|\varepsilon^{\| \bfk \|}
\label{estimthm-jk1} \\
| z_j^\bfk  |_{C^L} &\le & 
C h^2 |s_{\jvec +\bfk} s_{\jvec -\bfk}|^{-1}|\psi(h\omega_j)| (1+ |\bfk\cdot\bfvarpi |)^{-1}
\label{estimthm-jk2}
\end{eqnarray}
with some $L\ge (N+3)/(1-\alpha )$.
The functions $z_j^\bfk$ satisfy the equations (\ref{mfe-equation}) with a
defect bounded for $0\le \tau \le 1$ by
\begin{equation}\label{defect-thm}
|\delta_0^\bfk (\tau )| \le C h^{N+1}, \qquad
|\delta_j^\bfk (\tau )| \le C \omega_j^{-1} h^{N} \qquad\hbox{for}~~j=1,\ldots ,\ell .
\end{equation}
For $\,0\le nh \le h^\alpha$ the remainder term
$\bfr_n = (r_{n,j})$ in (\ref{mfe-intheorem}) is bounded by
\begin{equation}\label{remainder-thm}
|r_{n,0}| \le C h^{N+1},\qquad |r_{n,j}| \le C \omega_j^{-1}h^{N} \qquad\hbox{for}\quad 
j =1,\ldots ,\ell .
\end{equation}
The generic constant $C$ is independent of $\varepsilon$ and the frequencies
$\omega_j \ge \varepsilon^{-1}$, but depends on $\ell$, $N$, and the constants in 
Assumptions A and~B.
\end{theorem}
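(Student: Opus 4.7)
The plan is to construct the modulation functions $z_j^{\bfk}(\tau)$ iteratively from the modulation equations (\ref{mfe-equation}), using Lemma~\ref{prop-modifiedfrequencies} to invert the operator $L_j^{\bfk}$ on the non-resonant indices, and then to control the remainder $\bfr_n$ via a standard stability argument for the two-step recursion (\ref{trigo}).

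First I would classify the operators $L_j^{\bfk}$ according to the size of their leading multiplicative coefficient $4s_{\jvec+\bfk}s_{\jvec-\bfk}$ in (\ref{LhD}). By Lemma~\ref{prop-modifiedfrequencies}, whenever both $\jvec+\bfk\notin\calM$ and $\jvec-\bfk\notin\calM$, this coefficient is bounded from below by a multiple of $h^{2(1-\alpha-\mu)}$ and $L_j^{\bfk}$ acts essentially as multiplication (the higher-order $\tau$-derivatives come with extra $h^{1-\alpha}$ factors and are small perturbations). The only resonant indices within $\calK$ are therefore $(j,\bfk)=(0,\bfzero)$, where $s_{\jvec}=0$ because $\varpi_0=0$ turns $L_0^{\bfzero}$ into the second-order ODE $c_\bfzero h^{2(1-\alpha)}\ddot z_0^{\bfzero}+\ldots$, and $(j,\pm\jvec)$ for $j\ge 1$, where $s_{\jvec-\bfk}=s_\bfzero=0$ or $s_{\jvec+\bfk}=s_\bfzero=0$ and the operator degenerates to a first-order ODE $\pm 2\iu s_{2\jvec}h^{1-\alpha}\dot z_j^{\pm\jvec}+\ldots$; here the gap condition $|s_{2\jvec}|\ge \tfrac12 h^{1-\alpha-\mu}$ (which uses $2\jvec\notin\calM$) makes these ODEs non-degenerate. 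All other cases in $\calK$ are algebraically invertible.

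Next I would perform the construction by induction on $\|\bfk\|$ and on the iteration level, matching the ansatz against the Taylor-expanded right-hand side of (\ref{trigo}) and choosing initial values for the ODE modes from the truncated expansion (\ref{mod-init}) at leading order. The bounds (\ref{estimthm-00})--(\ref{estimthm-jk2}) propagate because the nonlinearity $\calU$ in (\ref{calU}) produces each new mode $z_j^{\bfk}$ out of a product of $\|\bfk\|$ already-constructed modes, each of them carrying a factor $\eps$ (from $z_j^{\pm\jvec}=\bigo(\omega_j^{-1})=\bigo(\eps)$); division by the factor $|s_{\jvec+\bfk}s_{\jvec-\bfk}|^{-1}$ from the algebraic inversion of $L_j^{\bfk}$ together with the explicit $h^2\psi(h\omega_j)$ prefactor in (\ref{mfe-equation}) yields (\ref{estimthm-jk1}); the alternative bound (\ref{estimthm-jk2}) comes from integrating by parts in time on the Fourier mode $e^{\iu(\bfk\cdot\bfvarpi)t}$ so that the $\bigo(1)$ loss turns into a gain of $|\bfk\cdot\bfvarpi|^{-1}$. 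Stopping the construction once the Taylor series in (\ref{LhD}) and (\ref{mod-init}) are truncated past $L\ge (N+3)/(1-\alpha)$ terms yields the defect bound (\ref{defect-thm}), the $\omega_j^{-1}$ factor for $j\ge 1$ being inherited from $\psi(h\omega_j)\cdot \omega_j^{-1}$ in the algebraic inversion (and similarly from $\sinc(h\omega_j)$ in the starting relation).

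The hard part, as usual for modulated Fourier expansions, will be the joint bookkeeping that makes this induction close: keeping the norms $\|\cdot\|_{C^L}$ of each newly produced $z_j^{\bfk}$ compatible with the bounds needed to feed it back into $\nabla\calU$ in the next iteration step, while tracking the various $|s_{\jvec\pm\bfk}|^{-1}$ factors (which are at worst $h^{-(1-\alpha-\mu)}$ by Lemma~\ref{prop-modifiedfrequencies}) through the product rule so that each iteration gains a factor $h^{2\mu}$ (or similar) in front of the total loss; this is exactly where the choice of $\alpha,\mu$ from Lemma~\ref{lemma-gap} and the freedom in $\delta$ are used. Once the modulation functions are in place, the remainder $\bfr_n=\bfq_n-\sum_{\bfk\in\calK}\bfz^\bfk(h^{-\alpha}t)\e^{\iu(\bfk\cdot\bfvarpi)t}$ satisfies the linearization of (\ref{trigo}) around $\bfPhi\bfz^\bfzero$ with inhomogeneity given by the defect of size $\bigo(h^{N+1})$ in the $q_0$-component and $\bigo(\omega_j^{-1}h^N)$ in the $q_j$-component; a Lady-Windermere-fan-type argument over the $h^{\alpha-1}$ many steps, using the bound $\|\cos(h\bfOmega)\|\le 1$ and the Lipschitz constant of $\bfg$ on $K_\rho$, transfers these defects to (\ref{remainder-thm}) and closes the theorem.
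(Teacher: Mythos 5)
Your overall strategy coincides with the paper's: a Picard-type iteration on the modulation system, with the three-fold classification of $L_j^\bfk$ (second-order ODE for $(0,\bfzero)$, first-order ODE for $(j,\pm\jvec)$ using $2\jvec\notin\calM$ and $|s_{2\jvec}|\ge\frac12 h^{1-\alpha-\mu}$, algebraic inversion otherwise), bounds propagated through the product structure of $\nabla\calU$, and a discrete Gronwall/stability argument for $\bfr_n$. There are, however, two places where your justification would not go through as written. The more serious one concerns (\ref{estimthm-jk2}): ``integrating by parts in time on the Fourier mode $\e^{\iu(\bfk\cdot\bfvarpi)t}$'' is not available here, because the oscillatory exponential has already been factored out when coefficients of $\e^{\iu(\bfk\cdot\bfvarpi)t}$ are compared --- each $z_j^\bfk$ is determined by an algebraic relation (or a slow ODE in $\tau$) in which no oscillatory integral remains to be exploited. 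The bound (\ref{estimthm-jk2}) is instead propagated through the nonlinearity: every summand of $\nabla_j^{-\bfk}\calU(\bfPhi\bfz)$ is a product of factors $z_{j_i}^{\bfk^i}$ with $\bfk^1+\dots+\bfk^m=\bfk$ modulo $\calM$, so that $\bfk\cdot\bfvarpi=\bfk^1\cdot\bfvarpi+\dots+\bfk^m\cdot\bfvarpi$, and the elementary inequality $1+|\bfk\cdot\bfvarpi|\le\prod_i\bigl(1+|\bfk^i\cdot\bfvarpi|\bigr)$ combined with the induction hypothesis (each factor already carries $(1+|\bfk^i\cdot\bfvarpi|)^{-1}$, respectively $\omega_{j_i}^{-1}$ for the diagonal modes) yields the claim. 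This bound is not cosmetic: it is exactly what is needed later to control $(\bfk\cdot\bfvarpi)z_j^\bfk$ in the oscillatory almost-invariant, so a proof of the theorem must actually establish it.

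The second issue is the source of the defect bound (\ref{defect-thm}). Truncating the Taylor series in (\ref{LhD}) and (\ref{mod-init}) after $L\ge(N+3)/(1-\alpha)$ terms only controls the truncation remainders of size $\bigo(h^{L(1-\alpha)})$; the dominant part of $\delta_j^\bfk$ is the mismatch between consecutive Picard iterates, and one must run the iteration $M\ge(N+1+\alpha)/\mu$ times so that the rescaled difference $\bfv^{M+1}-\bfv^M=\bigo(h^{M\mu-\alpha})$ is small enough, with the $\omega_j^{-1}h^N$ (rather than $h^{N+1}$) scaling for $j\ge1$ coming from the $(h\omega_j)^{-1}$ weight in converting back from the rescaled variables. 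You do describe the per-iteration gain of a power of $h^\mu$ in your ``hard part'' paragraph, so this is a matter of attributing the defect to the correct mechanism and fixing the required number of iterations, but as stated the sentence deriving (\ref{defect-thm}) from the choice of $L$ alone is not correct.
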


Details of the proof of Theorem~\ref{thm:mfe}
will be given in Section~\ref{sect:proof-mfe} below.
It combines the techniques of \cite{gauckler13esi} for the analytic solution
of (\ref{ode}) and those of \cite{cohen05nec} for the numerical solution.

The second formula of (\ref{trigo}) requires a modulated Fourier expansion for
the derivative approximation.

\begin{corollary}\label{cor:mfe-derivative}
Under the assumptions of Theorem~\ref{thm:mfe} the derivative
approximation of (\ref{trigo}) satisfies,
for $\tau = h^{-\alpha}t $ and $t=nh\le h^\alpha$,
\[
\bfp_n =  \sum_{\bfk\in\calK} \bfw^\bfk (\tau )\,
\e^{\iu (\bfk\cdot\bfvarpi )t}
+ \dot\bfr_{n} ,
\]
where
\[
 \bfw^\bfk (\tau ) = \bigl( 2h\sinc (h\bfOmega ) \bigr)^{-1}\Bigl(
\bfz^\bfk (\tau +h^{1-\alpha})\, \e^{\iu (\bfk\cdot\bfvarpi )h} -
\bfz^\bfk (\tau -h^{1-\alpha})\, \e^{-\iu (\bfk\cdot\bfvarpi )h} \Bigr) 
\]
and
\begin{equation}\label{remainderdot}
|\dot r_{n,0}| \le C h^{N}\quad\hbox{and}\quad
|\dot r_{n,j}| \le C h^{N}/\kappa \quad\hbox{for}\quad j=1,\ldots ,\ell .
\end{equation}
\end{corollary}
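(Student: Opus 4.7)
The plan is to derive the stated expansion for $\bfp_n$ directly from the second equation of (\ref{trigo}) by plugging in the expansion (\ref{mfe-intheorem}) of Theorem~\ref{thm:mfe} evaluated at the neighbouring time steps $t\pm h$, and then to bound the resulting error term componentwise.

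Starting from $\bfp_n = (2h\sinc(h\bfOmega))^{-1}(\bfq_{n+1}-\bfq_{n-1})$, I would substitute the expansion of $\bfq_{n\pm 1}$ from Theorem~\ref{thm:mfe} (valid for $(n\pm 1)h\le h^\alpha$, which holds after a harmless shrinking of the time window). Collecting the exponentials and factoring out $\e^{\iu(\bfk\cdot\bfvarpi)t}$, the contribution of the modulation functions for each $\bfk\in\calK$ is exactly
\[
 (2h\sinc(h\bfOmega))^{-1}\Bigl(\bfz^\bfk(\tau+h^{1-\alpha})\,\e^{\iu(\bfk\cdot\bfvarpi)h}-\bfz^\bfk(\tau-h^{1-\alpha})\,\e^{-\iu(\bfk\cdot\bfvarpi)h}\Bigr),
\]
which is precisely the definition of $\bfw^\bfk(\tau)$. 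The leftover is
\[
 \dot\bfr_n = (2h\sinc(h\bfOmega))^{-1}(\bfr_{n+1}-\bfr_{n-1}),
\]
and it only remains to turn the componentwise bounds on $\bfr_n$ from (\ref{remainder-thm}) into the bounds (\ref{remainderdot}) on $\dot\bfr_n$.

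For the slow component I use $\omega_0=0$, so $\sinc(h\omega_0)=1$ and the prefactor is just $(2h)^{-1}$; the triangle inequality with $|r_{n\pm 1,0}|\le Ch^{N+1}$ gives $|\dot r_{n,0}|\le Ch^N$. For $j\ge 1$ the prefactor equals $\omega_j/(2\sin(h\omega_j))$, and combining the bound $|\sin(h\omega_j)|\ge\kappa$ of (\ref{eq-nonres}) with $|r_{n\pm 1,j}|\le C\omega_j^{-1}h^N$ yields
\[
 |\dot r_{n,j}| \le \frac{\omega_j}{2|\sin(h\omega_j)|}\cdot 2C\omega_j^{-1}h^N \le \frac{Ch^N}{\kappa}.
\]
This is the desired estimate.

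There is no real obstacle here; the argument is essentially a direct computation once Theorem~\ref{thm:mfe} is available. The only point requiring a little care is that the expansion must hold at $t=(n\pm 1)h$, which is handled by applying Theorem~\ref{thm:mfe} on a slightly larger interval, and the division by $\sinc(h\omega_j)$ for $j\ge 1$, whose smallness is precisely controlled by the non-resonance condition~(\ref{eq-nonres}) built into Assumption~A.
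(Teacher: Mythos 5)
Your proposal is correct and follows exactly the paper's argument: the remainder is identified as $\dot\bfr_n=(2h\sinc(h\bfOmega))^{-1}(\bfr_{n+1}-\bfr_{n-1})$ and the bounds (\ref{remainderdot}) are read off from (\ref{remainder-thm}) together with $|\sin(h\omega_j)|\ge\kappa$ from Assumption~A. Your componentwise computation simply makes explicit what the paper leaves to the reader.
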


\begin{proof}
The estimate for the remainder
\[
\dot\bfr_{n} = \bigl( 2h \sinc (h\bfOmega )\bigr)^{-1} \bigl( \bfr_{n+1} - \bfr_{n-1} \bigr) .
\]
follows from (\ref{remainder-thm}) and Assumption A.
\qed
\end{proof}

\section{Proof of Theorem~\ref{thm:mfe}}
\label{sect:proof-mfe}

Our aim is to construct functions $z^\bfk_j$ ($j=0,\dots,\ell$ and $\bfk\in\calK$)
such that the defect in equations
(\ref{mfe-equation}) is of size $\bigo(h^{N+1})$. For this we truncate the
Taylor series expansions in (\ref{LhD}) and (\ref{mod-init}) after $L\ge (N+3)/(1-\alpha )$ terms,
and we consider a Picard iteration improving the
approximation by a factor $h^\mu$ in every iteration. This requires $M\ge (N+1+\alpha)/\mu$
iterations.

\subsection{Construction of the modulation functions}
\label{sect:construction}

We denote by $\bfz^m=\bigl([z_j^\bfk]^m\bigr)$ the $m$th
iterate and 
distinguish between the following cases:
\begin{enumerate}
\item
For $j=0$ and $\bfk = \bfzero$ the first two terms in the expansion
(\ref{LhD}) disappear and after division by $h^{2(1-\alpha )}$
we iterate with a second order differential equation
(\ref{mfe-equation})
for $[z_{0}^\bfzero]^{m+1}$:
\begin{equation}\label{mod-it-0}
\hspace{-4mm} 
\frac{\d^2[z_{0}^\bfzero]^{m+1}}{\d\tau^2}  +
\Big[ \frac{2h^{2(1-\alpha)}}{4!} \, \frac{\d^4z_{0}^\bfzero}{\d\tau^4} +
\frac{2h^{4(1-\alpha)}}{6!} \, \frac{\d^6z_{0}^\bfzero}{\d\tau^6} + \ldots \Big]^m
 =  -  h^{2\alpha} \, \nabla_0^{-\bfzero} \calU (\bfPhi\bfz^m ) .
\end{equation}
Here and in the following equations the three dots indicate a truncation
of the series after the term corresponding to the $L$th derivative.
The notation $ \nabla_j^{-\bfk} \calU (\bfPhi\bfz^m ) $ should be interpreted so
that all appearing $z_j^\bfk$ (including $\bfk =\bfzero$) are replaced by
their $m$th iterate.
\item
For $j\ne 0$ and $\bfk = \pm\jvec$ the first term in (\ref{LhD}) disappears and
we iterate using a first order differential equation
for~$[z_{j}^{\pm\jvec}]^{m+1}$:
\begin{eqnarray}
&&\hspace{-3.5mm}
  \pm 2\iu\, s_{2\jvec}\,\frac{\d[z_{j}^{\pm\jvec}]^{m+1}}{\d\tau} 
+ \Big[ c_{2\jvec}h^{1-\alpha} \frac{\d^2z_{j}^{\pm\jvec}}{\d\tau^2} \pm
 \iu \,\frac{s_{2\jvec}}3 h^{2(1-\alpha)} \frac{\d^3z_{j}^{\pm\jvec}}{\d\tau^3} + \ldots \Big]^m
\nonumber
\\[1mm]
&& \hspace{0.1cm}
 =  2 h^{\alpha -1} \bigl( \cos (h\omega_j ) - \cos (h\varpi_j )\bigr) [z_j^{\pm\jvec} ]^m
  - h^{1+\alpha } \psi (h\omega_j) \nabla_{j}^{\mp\jvec}\, \calU (\bfPhi\bfz^m ).\qquad
\label{mod-it-jj} 
\end{eqnarray}
\item
In all other cases we iterate with an explicit equation for~$[z_j^\bfk]^{m+1}$:
\begin{eqnarray}
&&\hspace{-2mm}
4\,s_{\jvec+\bfk}s_{\jvec-\bfk}\, [z_{j}^\bfk]^{m+1}
+ \Big[ 2 \iu \,s_{2\bfk} h^{1-\alpha}
\dot z_j^{\bfk}  + c_{2\bfk}h^{2(1-\alpha)}
\ddot z_j^{\bfk}   + \ldots \Bigr]^m
\label{mod-it-jk} 
\\[1mm]
&& \hspace{1.5cm}
 = 
 2\bigl( \cos (h\omega_j ) - \cos (h\varpi_j )\bigr) [z_j^\bfk ]^m
-  h^2 \psi (h\omega_j ) \nabla_j^{-\bfk} \calU (\bfPhi\bfz^m ). 
\nonumber
\end{eqnarray}
\end{enumerate}

We need initial values
$[z_0^\bfzero]^{m+1} (0), \frac{\d}{\d\tau} [z_0^\bfzero]^{m+1} (0)$,
and $[z_j^{\pm \jvec}]^{m+1}(0) $ for $j\ne 0$. Note that at the iteration $m$
the values $\bfz^m (0) = ([z_j^\bfk ]^m) (0)$ together with the derivatives
of $[z_j^\bfk ]^m$ at $\tau =0$ are known.
Extracting the
dominant terms in (\ref{mod-init}), the required initial values are
determined by the equations
\begin{eqnarray}
[z_0^\bfzero ]^{m+1} (0)&=& q_{0,0} -  \sum_{\bfk\ne\bfzero}  [z_{0}^\bfk]^{m} (0),
\nonumber \\[0mm]
 2 h^{1-\alpha}[\dot z_0^\bfzero ]^{m+1} (0) &=& 2h p_{0,0} -
 \sum_{\bfk\ne\bfzero} \Bigl[ 2\iu \,s_{2\bfk} z_0^\bfk +
 2 \,c_{2\bfk} h^{1-\alpha}\dot z_0^\bfk  + 
 \ldots \Bigr]^{m}(0) \quad  \label{mod-init2}\\
 &-& \Bigl[ \frac 2{3!}\, h^{3(1-\alpha )}\frac{\d^3z_{0}^{\bfzero}}{\d\tau^3} +
  \frac 2{5!}\, h^{5(1-\alpha )}\frac{\d^5z_{0}^{\bfzero}}{\d\tau^5} +\ldots \Bigr]^{m} (0) 
\nonumber
\end{eqnarray}
 and for $j=1,\dots,\ell$ by
\begin{eqnarray}
[z_j^\jvec + z_j^{-\jvec} ]^{m+1} (0)&=& q_{0,j} -  
\sum_{\bfk\ne\pm\jvec}  [z_{j}^\bfk]^{m} (0),\nonumber \\[0mm]
 2 \iu \,s_{2\jvec}[z_j^\jvec - z_j^{-\jvec} ]^{m+1} (0) &=& 2h \sinc (h\omega_j )p_{0,j} -
 \sum_{\bfk\ne\pm\jvec} 2\iu \,s_{2\bfk} [z_j^\bfk ]^{m}(0)\label{mod-init3} \\
&-&
 \sum_{\bfk\in \calK}
 \Bigl[ 2 \,c_{2\bfk} h^{1-\alpha}\dot z_j^\bfk + 
\iu \,s_{2\bfk} h^{2(1-\alpha)}\ddot z_j^\bfk 
  +\ldots \Bigr]^{m} (0) . \nonumber
\end{eqnarray}
The starting iterates are chosen for $(j,\bfk )=(0,\bfzero )$ as 
$[z_0^\bfzero]^0(\tau) = q_{0,0}  $, and
$[z_j^\bfk]^0(\tau)=0$ for all other indices $(j,\bfk)$.

\subsection{Bounds for the modulation functions}
\label{sect:bounds-mod}

To get the desired bounds (\ref{estimthm-00})--(\ref{estimthm-jk2}) of Theorem~\ref{thm:mfe} for the coefficient
functions of the modulated Fourier expansion, we first study the
individual terms appearing in equations (\ref{mod-it-0})-(\ref{mod-it-jk}).

The coefficients of the terms in (\ref{mod-it-0}) corresponding
to arguments of the $m$th iterate
are all small because $0<\alpha < 1$.

Next, we consider the coefficients of the terms in (\ref{mod-it-jj}) corresponding
to arguments of the $m$th iterate.
Since $2\jvec \not\in \calM$ by Lemma~\ref{prop-modifiedfrequencies}, it follows from (\ref{eq-lcbound}) that
$|s_{2\jvec}|\ge \frac 12 h^{1-\alpha -\mu }$. This implies that
$| c_{2\jvec} h^{1-\alpha }/s_{2\jvec}|\le 2 h^\mu$ and
$|h^{1+\alpha}\psi (h\omega_j) /s_{2\jvec}|\le 2 C h^{2\alpha +\mu}$, which are small
for positive $\alpha$ and $\mu$. We have
\[
\frac{h^{\alpha -1} \bigl( \cos (h\omega_j ) - \cos (h\varpi_j )\bigr)}{s_{2\jvec}} =
\frac{2 h^{\alpha -1}\sin (h\frac{\omega_j +\varpi_j}2 ) \sin (h \frac{\varpi_j- \omega_j}2)}
{\sin (h\varpi_j )}  ,
\]
which, by $|\sin (h\frac{\omega_j +\varpi_j}2 )|\le |\sin (h\varpi_j)| + |\sin(h\frac{\omega_j-\varpi_j}2)|$ and by (\ref{eq-diff-om-omt}) and (\ref{eq-lcbound}), is seen
to be bounded by $\bigo (h^\mu )$. This implies that, after division by
$2\iu \, s_{2\jvec}$, the coefficients of the terms in (\ref{mod-it-jj}) are small.

Finally, we consider the coefficients of the terms in (\ref{mod-it-jk}).
From the addition formula for sine we have
\begin{equation}\label{usefulest}
\bigg|\frac{s_{2\bfk}}{s_{\jvec +\bfk} s_{\jvec -\bfk}} \bigg| \le
\frac{|s_{\jvec +\bfk}|+| s_{\jvec -\bfk}|}{|s_{\jvec +\bfk} s_{\jvec -\bfk}|} \le
\frac 1{| s_{\jvec -\bfk}|} + \frac 1{| s_{\jvec +\bfk}|} .
\end{equation}
For $\bfk \ne \pm\jvec$, we have $\jvec \pm \bfk \not\in \calM$ so that
the estimate (\ref{eq-lcbound}) can be applied. This implies that, after division
by $s_{\jvec +\bfk} s_{\jvec -\bfk}$, the coefficient
of $[\dot z_j^\bfk]^m$ in (\ref{mod-it-jk}) is of size $\bigo (h^\mu )$, and that
of the second derivative is $\bigo (h^{2\mu})$. Similar computations
show that the last two terms have coefficients of size $\bigo (h^{2\mu})$
and $\bigo (h^{2\alpha + 2 \mu })$, respectively.

Another useful estimate for the following analysis is
\begin{equation}\label{eq-psiest}
|\psi (h\omega_j )| = \Bigl| \frac{\sincws (h\omega_j ) \phi (h\omega_j )}{\sigma (h\omega_j )} \Bigr| \le \frac{C}{h\omega_j },
\end{equation}
which follows from Assumption~B.

We now prove by induction on $m$, for $m=0,1,\ldots , M$,
\begin{eqnarray}
| [z_0^\bfzero ]^m |_{C^r} &\le & C\label{estim-00} \\
| [z_j^{\pm\jvec}]^m |_{C^r} &\le & C \omega_j^{-1} , \qquad j=1,\ldots ,\ell\label{estim-jj}\\
| [z_j^{\bfzero}]^m |_{C^r} &\le & C h^2 |s_{\jvec}|^{-2}|\psi(h\omega_j)|\varepsilon^{2}  , 
\qquad j=1,\ldots ,\ell\label{estim-j0}
\end{eqnarray}
and for all other $(j,\bfk )$
\begin{eqnarray}
| [z_j^\bfk ]^m |_{C^r} &\le & 
C h^2 |s_{\jvec +\bfk} s_{\jvec -\bfk}|^{-1}|\psi(h\omega_j)|\varepsilon^{\| \bfk \|}\label{estim-jk1} \\
| [z_j^\bfk ]^m |_{C^r} &\le & 
C h^2 |s_{\jvec +\bfk} s_{\jvec -\bfk}|^{-1}|\psi(h\omega_j)| (1+ |\bfk\cdot\bfvarpi |)^{-1}
\label{estim-jk2}
\end{eqnarray}
where $r=L(M-m+1)$. By definition of the starting iterates, $[z_0^\bfzero]^0 (\tau)
= q_{0,0}$ is constant
and all other functions vanish, so that the statements hold for $m=0$.

Assuming the bounds to be true
at level $m$, (\ref{estim-j0}) and (\ref{estim-jk1}) follow for the
$(m+1)$th iterate from
the previous bounds on the coefficients of the equation (\ref{mod-it-jk}) and
the fact that every summand in $\nabla_j^{-\bfk} \cal U (\bfPhi \bfz )$
contains factors $z_{j_1}^{\bfk^1},\ldots ,z_{j_m}^{\bfk^m}$ with
$\bfk^1 + \ldots +\bfk^m =\bfk$ modulo $\calM$. The estimate (\ref{estim-jk2})
follows by applying the triangular inequality to
$\bfk\cdot\bfvarpi = \bfk^1\cdot\bfvarpi +\ldots + \bfk^m\cdot\bfvarpi $ and using
\[
1+|\bfk\cdot\bfvarpi | \le (1+ |\bfk^1\cdot\bfvarpi |)\cdot \ldots\cdot (1 + | \bfk^m\cdot\bfvarpi |) .
\]
The same argument applied
to $\omega_j = \bfk^1\cdot\bfvarpi +\ldots + \bfk^m\cdot\bfvarpi $
yields the estimate $| [\dot z_j^{\pm\jvec}]^{m+1} |_{C^r} \le C \omega_j^{-1}$
from equation (\ref{mod-it-jj}), and we get 
$| [\ddot z_0^\bfzero ]^{m+1} |_{C^r} \le Ch^{2\alpha}$ from (\ref{mod-it-0})
using $\alpha \le 1/2$.

In order to derive estimates of the initial values $[z_0^\bfzero ]^{m+1}(0)$, $[\dot z_0^\bfzero  ]^{m+1}(0)$ and $[z_j^{\pm\jvec} ]^{m+1} (0)$, we use that 
\begin{equation}\label{bounds-initial}
q_{0,0} \le C, \quad p_{0,0} \le C, \quad \omega_j q_{0,j} \le C, \quad p_{0,j} \le C \quad \text{for} \quad j=1,\dots,\ell
\end{equation}
by the bounded energy condition (\ref{bound-energy}) and the bounds on the potential and the numerical solution of Assumption A.
From the relations
(\ref{mod-init2}), still using the estimates at level $m$ and in particular $|[z_0^\bfk ]^m |_{C^r}\le C\eps h^{2\alpha}$ for $\bfk\ne\bfzero$, we obtain the bounds
$|[z_0^\bfzero ]^{m+1}(0)|\le C$ and $|[\dot z_0^\bfzero  ]^{m+1}(0)|\le Ch^\alpha$ 
from the condition $\eps\le h/c_0$.
Using (\ref{eq-psiest}) and the condition $\eps\le h/c_0$ of Assumption~A we get
$| [z_j^\bfk]^m |_{C^r}  
\le C \omega_j^{-1}$ and $h^{1-\alpha}| [z_j^\bfk]^m |_{C^r} / |s_{2\jvec} | 
\le C \omega_j^{-1}$ for $j\ne 0$. Using in addition (\ref{usefulest}) we get
$|s_{2\bfk} [z_j^\bfk]^m |_{C^r} / |s_{2\jvec} | 
\le C \omega_j^{-1}$ 
for $j\ne 0$. These estimates yield
$|[z_j^{\pm\jvec} ]^{m+1} (0)|\le C\omega_j^{-1}$ from (\ref{mod-init3}).
The bounds (\ref{estim-00}) and
(\ref{estim-jj}) are finally obtained by integration of the relations
(\ref{mod-it-0}) and (\ref{mod-it-jj}), respectively.

\subsection{Bounds for the defect}
\label{sect:defect}

Here, we prove the estimate (\ref{defect-thm}) for the defect $\delta_j^\bfk$ in Theorem~\ref{thm:mfe}. 
The defect $\delta_j^\bfk (\tau )$ of (\ref{mfe-equation}) for the $M$th iterate
of Section~\ref{sect:construction} is equal to $[\delta_j^\bfk]^M (\tau )$, where
$[\delta_j^\bfk]^m (\tau )$ is the defect
in inserting the $m$th iterate $\bfz^m =([z_j^\bfk]^m)$ into the modulation equations (\ref{mfe-equation}).
It satisfies for $(j,\bfk )=(0,\bfzero )$
$$
[\delta_0^\bfzero]^m= -h^{-2\alpha}\biggl(  \frac{\d^2[z_{0}^\bfzero]^{m+1}}{\d\tau^2} - \frac{\d^2[z_{0}^\bfzero]^{m}}{\d\tau^2} \biggr) ,
$$
for $j=1,\dots,\ell$ and $\bfk = \pm \jvec$
\[
[\delta_j^{\pm\jvec}]^m = \mp2\iu \, s_{2\jvec}\, h^{-1-\alpha}\biggl(
\frac{\d[z_{j}^{\pm\jvec}]^{m+1}}{\d\tau} - \frac{\d[z_{j}^{\pm\jvec}]^{m}}{\d\tau}\biggr) ,
\]
and for all other $(j,\bfk)$
$$
[\delta_j^\bfk]^m =  -\frac {4}{h^2}\, s_{\jvec +\bfk} s_{\jvec -\bfk}\, 
\Bigl( [z_{j}^\bfk]^{m+1} - [z_{j}^\bfk]^{m} \Bigr).
$$
With the notation
$$
\bfLambda\bfz = \bfv = (v_j^\bfk) \quad\hbox{with} \quad \left\{
\begin{array}{l}
v_0^\bfzero = h^{-2\alpha}z_0^\bfzero
\\[1mm]
v_0^\bfk = 4 \, s_{\bfk}^2 h^{-2} z_0^\bfk \quad\ \bfk \ne\bfzero
\\[1mm]
v_j^{\pm\jvec} = 2\, h\omega_j\,
s_{2\jvec}\, h^{-1-\alpha} z_j^{\pm\jvec}, \quad\  j=1,\dots,\ell
\\[1mm]
v_j^\bfk = 4 \,h\omega_j \, s_{\jvec +\bfk} s_{\jvec -\bfk} h^{-2} z_j^\bfk \quad\ \hbox{else,}
\end{array}
\right.
$$
and the above formulas for the defect we have in
the norm (\ref{norm}) that
\[
 \bigl|[\delta_0^\bfk ]^m\bigr| \le  \| \bfv^{m+1} - \bfv^m \|_{C^2}, \qquad
 \bigl|[\delta_j^\bfk]^m\bigr| \le  (h\omega_j)^{-1} \| \bfv^{m+1} - \bfv^m \|_{C^1}
\]
for $j=1,\ldots ,\ell$ and all $\bfk\in\calK$.
We therefore study $\bfv^{m+1} - \bfv^m$ and show by induction on $m$ that 
for $m\le M$ and $r=L(M-m+1)$,
\begin{equation}\label{eq:induction}
\| \bfv^{m+1} - \bfv^m \|_{C^r} = \bigo(h^{m\mu-\alpha}),
\end{equation}
which implies the estimate \eqref{defect-thm} of the defect in Theorem~\ref{thm:mfe} if we use $M\ge (N+1+\alpha)/\mu$ iterations for the construction of the modulation functions.
In the following estimates we repeatedly use (\ref{eq-psiest}) to obtain the factor $\omega_j^{-1}$
where needed.
For $m=0$, the definition of $\bfv^0$ and the bounds
of Section~\ref{sect:bounds-mod} yield
$| [v_j^\bfk ]^1 - [v_j^\bfk ]^0 |_{C^{LM+L}} = \bigo (h^{-\alpha})$. 

For the induction proof
we first consider the functions $[v_j^\bfk]^m$ defined in (\ref{mod-it-jk}). The bounds
of Section~\ref{sect:bounds-mod} yield
\[
\bigl| [v_j^\bfk ]^{m+1} - [v_j^\bfk]^{m} \bigr|_{C^r} \le C h^\mu \| \bfv^m - \bfv^{m-1} \|_{C^{r+L}}.
\]
We next consider the diagonal elements $[v_j^{\pm\jvec}]^m$. From (\ref{mod-init3}) we obtain that
$$
\bigl| [v_j^{\pm\jvec}]^{m+1}(0) - [v_j^{\pm\jvec}]^{m}(0) \bigr| \le Ch^\mu \| \bfv^m - \bfv^{m-1} \|_{C^L}.
$$
Integration of equation (\ref{mod-it-jj})
then yields
$$
\bigl| [v_j^{\pm\jvec}]^{m+1}(\tau) - [v_j^{\pm\jvec}]^{m}(\tau) \bigr| \le C h^\mu \| \bfv^m - \bfv^{m-1} \|_{C^L}, \quad 0\le\tau\le1 .
$$
Repeated differentiation in (\ref{mod-it-jj}) further shows that 
$$
\bigl| [v_j^{\pm\jvec}]^{m+1} - [ v_j^{\pm\jvec}]^{m} \bigr|_{C^r} \le 
C h^\mu \| \bfv^m - \bfv^{m-1} \|_{C^{r+L}}.
$$
Using $2(1-\alpha ) \ge \mu$ and $2\alpha\ge \mu$ we also bound
$$
\bigl| [v_0^\bfzero]^{m+1} - [v_0^\bfzero]^{m} \bigr|_{C^r} \le C h^\mu \| \bfv^m - \bfv^{m-1} \|_{C^{r+L}}.
$$
Summarizing we get
$$
\| \bfv^{m+1} - \bfv^m \|_{C^r} \le C h^\mu \| \bfv^{m} - \bfv^{m-1} \|_{C^{r+L}} .
$$
This proves \eqref{eq:induction} and the estimate of the defect of Theorem~\ref{thm:mfe}.

\subsection{Solution approximation}

In this section we prove the bounds (\ref{remainder-thm}), which then completes the
proof of Theorem~\ref{thm:mfe}.
We consider the $M$th iterates of the modulation functions
(with $M \ge (N+1+\alpha)/\mu$) and omit the superscript $M$ 
on the modulation functions and their defects. 
The truncated modulated Fourier expansion
\begin{equation}\label{qntilde}
\widetilde \bfq_n= \sum_{\bfk\in\calK} \bfz^\bfk (h^{-\alpha}t)\, \e^{\iu (\bfk\cdot\bfvarpi )t},
\qquad t=nh ,
\end{equation}
inserted into the method (\ref{trigo})
$$
\widetilde \bfq_{n+1} - 2\cos (h\bfOmega ) \, \widetilde \bfq_n + \widetilde \bfq_{n-1} =
- h^2 \bfPsi \nabla U (\bfPhi \widetilde \bfq_n ) + h^2 \bfd_n
$$
has a defect $\bfd_n$. The $j$th component of the defect is given by
\begin{equation}\label{defect}\begin{split}
d_{n,j} &= \sum_{\bfk\in \calK} \bigl(\delta_j^\bfk(h^{-\alpha}t) - \widehat{\rho}_j^\bfk(h^{-\alpha}t) \bigr) \e^{\iu(\bfk\cdot\bfvarpi)t}\\
 &\qquad - 
\psi (h\omega_j)\Bigl(\, \sum_{\bfk\in\calN} \nabla_j^{-\bfk}\widehat{\calU}(\bfPhi\bfz(h^{-\alpha}t))
\e^{\iu(\bfk\cdot\bfvarpi)t} + \rho_{n,j}\Bigr),
\end{split}\end{equation}
where  $\delta_j^\bfk$ is defined in (\ref{mfe-equation}), $\widehat{\rho}_j^\bfk$ denotes the remainder term of the truncated Taylor series expansions in (\ref{LhD}), and $(\rho_{n,j})$
denotes  the remainder term in the truncated Taylor series expansion of the
gradient of $U$ around $\bfPhi\bfz^\bfzero(h^{-\alpha}t)$.
The second (finite) sum collects those terms that where neglected in the definition of $\calU$ in (\ref{calU}), i.e., $\widehat{\calU}$ is defined as $\calU$ in (\ref{calU}) but with the last sum over $\bfk^l\in\calN$ with $\bfk^l\ne\bfzero$ and $\|\bfk^1\|+\dots+\|\bfk^m\|>N+1$.
Writing $\widetilde \bfq_n = \bfz^\bfzero (h^{-\alpha}t)
+\widehat \bfq_n$, and
omitting the index $n$ and the argument $\tau=h^{-\alpha}t$ we have
\[
\rho_{n,j}= \nabla_j U(\bfPhi\widetilde\bfq) - \nabla_j U(\bfPhi\bfz^\bfzero ) 
- \sum_{m=1}^N
\sum_{j_{1},\ldots ,j_{m}=0}^n
 \frac 1 {m!}\, \partial_{j_1}\ldots\partial_{j_m}\nabla_j U(\bfPhi\bfz^\bfzero )
\bigl( \widehat q_{j_{1}},\ldots ,\widehat q_{j_{m}} \bigr).
\]
In the following we work with the weighted norm
\[
\| \bfd_n \|_\bfomega = |d_{n,0}| +\sum_{j=1}^\ell h\omega_j | d_{n,j} | .
\]
On $\tau$-intervals of length $1$, which corresponds to $t=nh\le h^\alpha$,
the first term in~(\ref{defect}) is $\bigo (h^{N+1})$ by definition of the
modulation functions (see Section~\ref{sect:defect}), and the $j$th component
of the other two terms contains a factor $\psi (h\omega_j )\varepsilon^{N+1}$
as a consequence of the appearance of sufficiently many factors of
$z_j^\bfk$. By Assumption A ($h/\varepsilon \ge c_0 >0$) and (\ref{eq-psiest}) 
this implies that
\[
\|\bfd_n\|_\bfomega = \bigo (h^{N+1}) \qquad \hbox{for} \qquad nh\le h^\alpha . 
\]

As for the defect in Section~\ref{sect:defect} it follows from (\ref{mod-init2})
and (\ref{mod-init3}) that
\[
\begin{array}{rcl}
\widetilde q_{0,0} - q_{0,0} &=& [z_0^\bfzero ]^{M}(0) -  [z_0^\bfzero ]^{M+1}(0) \\[2mm]
\widetilde q_{0,j} - q_{0,j} &=& [z_j^{\jvec} + z_j^{-\jvec}]^{M}(0) -  
[z_j^{\jvec} + z_j^{-\jvec}]^{M+1}(0) .
\end{array}
\]
The estimate (\ref{eq:induction}) for $m=M$
proves that $\| \widetilde \bfq_0 - \bfq_0 \|_\bfomega = \bigo (h^{N+1})$.
Similarly, for the truncated derivative approximation
\[
\widetilde \bfp_n =\sum_{\bfk\in\calK}
\bfw^\bfk (\tau ) \e^{\iu (\bfk\cdot\bfvarpi )t}, \qquad t=nh
\]
with components $\widetilde p_{n,j}$ (see
Corollary~\ref{cor:mfe-derivative}), we have (up to an
error of size $\bigo (h^{N+3})$)
\[
\begin{array}{rcl}
h ({\widetilde p}_{0,0} - p_{0,0}) &=& h^{1-\alpha} \bigl( [\dot z_0^\bfzero ]^{M}(0) 
-  [\dot z_0^\bfzero ]^{M+1}(0) \bigr) \\[2mm]
h \sinc (h\omega_j ) ({\widetilde p}_{0,j} - p_{0,j}) &=& \iu \,s_{2\jvec}
\bigl( [z_j^{\jvec} - z_j^{-\jvec}]^{M}(0) -  
[z_j^{\jvec} - z_j^{-\jvec}]^{M+1}(0)\bigr) .
\end{array}
\]
The estimate (\ref{eq:induction}) and the relation (\ref{trigo-q1}) for $\bfq_1$
yield the bound $\|\widetilde \bfq_1 - \bfq_1 \|_\bfomega = \bigo (h^{N+1})$.
We have used that the nonlinearity $\bfPsi \nabla U (\bfPhi \bfq )$ is Lipschitz-continuous
in the norm $\|\cdot \|_\bfomega$
with a constant that only depends on bounds of the derivatives of the potential $U$.
Using a discrete Gronwall Lemma, a standard analysis of the propagation of
errors in the method (\ref{trigo}) (see \cite[Section XIII.4.1]{hairer06gni}) then proves
the bound of $\bfr_n =\bfq_n - \widetilde \bfq_n$ for $0\le nh \le h^\alpha$ as stated in
Theorem~\ref{thm:mfe}.

\section{Almost-invariants of the modulation system}
\label{sect:invariants}

In this section we show that the system for the modulation functions
has two almost-invariants -- one is related to the slow energy $H_{\rm slow}(\bfp ,\bfq )$
and the other to the oscillatory energy $H_\bfomega (\bfp ,\bfq )$.

\subsection{Almost-invariant related to the slow energy}
\label{sect:invariant1}

We multiply the equation (\ref{mfe-equation}) by
$\phi (h\omega_j ) (\dot z_j^{-\bfk})^\T$ and sum over all $j\in \{0,\ldots ,\ell\}$
and $\bfk\in \calK$ to obtain
\begin{equation}\label{eq-invariantH}
\begin{array}{rcl}
\displaystyle\frac {h^{-\alpha}}{h^2}\sum_{j=0}^\ell\sum_{\bfk\in\calK}
\frac{\phi (h\omega_j )}{\psi (h\omega_j )} \Bigl( (\dot z_j^{-\bfk})^\T L_j^\bfk z_j^{\bfk} &-&  
2 \bigl( \cos (h\omega_j ) - \cos (h\varpi_j )\bigr) 
\displaystyle (\dot z_j^{-\bfk})^\T z_j^\bfk \Bigr)\\[0mm]
&& \hspace{-25mm}\displaystyle = -~  h^{-\alpha}\frac{\d}{\d \tau} \calU (\bfPhi\bfz ) + 
h^{-\alpha} \sum_{j=0}^\ell\sum_{\bfk\in\calK}
\frac{\phi (h\omega_j )}{\psi (h\omega_j )}  (\dot z_j^{-\bfk})^\T\delta_j^\bfk.
\end{array}
\end{equation}
As in \cite[page 508]{hairer06gni} the left-hand side of this equation is seen
to be a total differential. Therefore, there exists a function
$\calE [\bfz ] (t )$, which depends on the values at $\tau = h^{-\alpha}t$ of
the function $\bfz$ and of its first $L$ derivatives, such that
\begin{equation}\label{dtH}
\frac{\d}{\d t } \calE [\bfz ] (t ) = \bigo (h^{N+2-\alpha}/\kappa ) =
\bigo (h^{N+1}) .
\end{equation}
Here, we have used the bounds of Theorem~\ref{thm:mfe} for the defect
$\delta_j^\bfk$ and for the $z_j^\bfk$, and the estimate
\begin{equation}\label{phidurchpsi}
\Bigl| \frac{\phi (h\omega_j )}{\psi (h\omega_j )}\Bigr| =
\Bigl| \frac{\sigma (h\omega_j )}{\sincws (h\omega_j )}\Bigr| \le \frac {C_1 h\omega_j}{\kappa} ,
\end{equation}
which follows from Assumptions A and B.

\begin{theorem}\label{thm:invariant1}
In the situation of Theorem~\ref{thm:mfe} we have for $0\le t=nh \le h^\alpha$
\begin{eqnarray*}
 \calE [\bfz ] (t ) &=&  \calE [\bfz ] (0 ) +  \bigo ( t  h^{N+1}) \\[1mm]
 \calE [\bfz ] (t ) &=& H_{\rm slow} (\bfp_n, \bfq_n ) +
 \bigo (\varepsilon h^{-\alpha}  )+ \bigo (h^{2(1-\alpha )}).
\end{eqnarray*}
\end{theorem}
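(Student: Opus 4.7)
The first estimate follows immediately by integration of (\ref{dtH}): since $|\frac{\d}{\d s}\calE[\bfz](s)| = \bigo(h^{N+1})$ uniformly on $[0,h^\alpha]$, one has for $0\le t=nh\le h^\alpha$
\[
\calE[\bfz](t) - \calE[\bfz](0) = \int_0^t \frac{\d}{\d s}\calE[\bfz](s) \, \d s = \bigo(t\, h^{N+1}).
\]

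For the second estimate my plan is to exhibit the explicit form of $\calE[\bfz]$ produced by the total-derivative structure of (\ref{eq-invariantH}) and then separate its dominant part from lower-order remainders. Following the derivation on page~508 of \cite{hairer06gni}, each term $\frac 1{h^2}(\dot z_j^{-\bfk})^\T L_j^\bfk z_j^\bfk$ is rewritten, using the Taylor expansion of the shift operators in $L_j^\bfk$, as a total $t$-derivative of an explicit quadratic functional $\calK_{j,\bfk}[\bfz]$ in $z_j^\bfk$ and its $\tau$-derivatives (pairing $\bfk$ with $-\bfk$ ensures the expression is real). Summing over $(j,\bfk)$ and moving $-h^{-\alpha}\frac{\d}{\d\tau}\calU = -\frac{\d}{\d t}\calU(\bfPhi\bfz)$ from the right to the left yields the representation
\[
\calE[\bfz] = \sum_{j=0}^\ell\sum_{\bfk\in\calK} \frac{\phi(h\omega_j)}{\psi(h\omega_j)}\, \calK_{j,\bfk}[\bfz] + \calU(\bfPhi\bfz) + \calR[\bfz],
\]
where $\calR[\bfz]$ collects the total-derivative piece of the $\cos(h\omega_j)-\cos(h\varpi_j)$ correction, which by (\ref{eq-diff-om-omt}) and the modulation bounds is small.

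The dominant contribution is from $(j,\bfk)=(0,\bfzero)$: since $\bfk\cdot\bfvarpi=0$, Taylor expansion gives $L_0^\bfzero z_0^\bfzero = h^{2(1-\alpha)}\ddot z_0^\bfzero+\bigo(h^{4(1-\alpha)})$, hence $\calK_{0,\bfzero}[\bfz] = \tfrac 12 h^{-2\alpha}|\dot z_0^\bfzero|^2 + \bigo(h^{2(1-\alpha)})$. Corollary~\ref{cor:mfe-derivative} together with the smallness bounds (\ref{estimthm-j0})--(\ref{estimthm-jk1}) then yields $p_{n,0} = h^{-\alpha}\dot z_0^\bfzero(h^{-\alpha}t) + \bigo(\eps h^{-\alpha})$, so $\tfrac 12 h^{-2\alpha}|\dot z_0^\bfzero|^2 = \tfrac 12 |p_{n,0}|^2 + \bigo(\eps h^{-\alpha})$. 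From (\ref{calU}) and the smallness of $z_j^\bfk$ for $\bfk\ne\bfzero$, one gets $\calU(\bfPhi\bfz) = U(\bfPhi\bfz^\bfzero)+\bigo(\eps^2)$, and (\ref{mfe-intheorem}) gives $U(\bfPhi\bfz^\bfzero) = U(\bfq_n)+\bigo(\eps)$. All remaining summands $\calK_{j,\bfk}$ with $(j,\bfk)\ne(0,\bfzero)$ are estimated by combining the filter bound (\ref{phidurchpsi}) with the modulation function bounds (\ref{estimthm-jj})--(\ref{estimthm-jk2}): for diagonal modes ($j\ne 0$, $\bfk=\pm\jvec$) one uses $\omega_j^{-1}\le\eps$, and for the remaining off-diagonal modes the denominators $s_{\jvec\pm\bfk}$ together with $\eps\le h/c_0$ yield even smaller contributions.

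The main obstacle is precisely this bookkeeping: one must verify, mode by mode, that each non-dominant $\calK_{j,\bfk}$ as well as the correction $\calR[\bfz]$ fits within $\bigo(\eps h^{-\alpha})+\bigo(h^{2(1-\alpha)})$, where the two error scales reflect, respectively, the $h^{-\alpha}$ prefactor in (\ref{eq-invariantH}) acting on modulation amplitudes of order $\eps$, and the higher-order terms in the Taylor truncation of $L_j^\bfk$. Once this accounting is complete, assembling the dominant kinetic and potential pieces yields $\calE[\bfz](t) = \tfrac 12 |p_{n,0}|^2 + U(\bfq_n) + \bigo(\eps h^{-\alpha}) + \bigo(h^{2(1-\alpha)}) = H_{\rm slow}(\bfp_n,\bfq_n) + \bigo(\eps h^{-\alpha}) + \bigo(h^{2(1-\alpha)})$, as claimed.
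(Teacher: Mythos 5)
Your proposal is correct and follows essentially the same route as the paper: integration of (\ref{dtH}) for the first claim, and for the second the identification of $\calE$ as the total-derivative antiderivative of (\ref{eq-invariantH}), with dominant part $\frac12|h^{-\alpha}\dot z_0^\bfzero|^2+U(\bfPhi\bfz^\bfzero)$ and all other modes estimated via (\ref{phidurchpsi}) and the bounds of Theorem~\ref{thm:mfe}. The only detail worth making explicit in your bookkeeping is that the decisive $\bigo(\varepsilon h^{-\alpha})$ contribution comes from the $\bigl(\cos(h\omega_j)-\cos(h\varpi_j)\bigr)|z_j^{\pm\jvec}|^2$ correction on the diagonal modes, estimated via (\ref{eq-diff-om-omt}) and (\ref{estimthm-jj}) exactly as you indicate for $\calR[\bfz]$.
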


\begin{proof}
The first statement follows by integration of (\ref{dtH}).
We next show that
\begin{equation}\label{mod-Hslow}
 \calE [\bfz ] (t ) = \sfrac 12 |h^{-\alpha}\dot z_0^\bfzero (\tau )|^2 + U(\bfPhi \bfz^\bfzero ) +
 \bigo (\varepsilon h^{-\alpha}  )+ \bigo (h^{2(1-\alpha )}).
\end{equation}
By definition (\ref{calU}) of $\calU$ and the estimates (\ref{estimthm-00})--(\ref{estimthm-jk2}) on the modulation functions
we have $\calU (\bfPhi \bfz ) = U(\bfPhi \bfz^\bfzero ) + \bigo (\varepsilon^2 )$.
The term with $j=0$ and $\bfk =\bfzero$ in (\ref{eq-invariantH}) yields
$\frac 12 |h^{-\alpha}\dot z_0^\bfzero |^2 + \bigo (h^{2(1-\alpha )})$. 
The term with $j=0$ and $\bfk\ne\bfzero$ gives $\bigo (\varepsilon^2 )$
because of the estimates (\ref{eq-lcbound}) and (\ref{estimthm-jk1}).
For $j>0$ and $\bfk = \pm\jvec$ the dominant term is
\begin{eqnarray*}
&&\hspace{-8mm}\frac 1{h^2} \frac{\phi (h\omega_j )}{\psi (h\omega_j )}
\Bigl( \cos (h\omega_j ) - \cos (h\varpi_j )\Bigr) | z_j^{\pm \jvec}|^2\\
&& = \frac 1{h^2} \frac{\sigma (h\omega_j ) h \omega_j}{\sin (h\omega_j )}
\, 2 \sin \Bigl(\frac{h(\omega_j+\varpi_j )}2\Bigr) 
\sin \Bigl(\frac{h(\omega_j-\varpi_j )}2\Bigr) | z_j^{\pm \jvec}|^2\\
&& = \bigo (h^{-2} h \omega_j h^{1-\alpha+\mu } \omega_j^{-2})
= \bigo (\omega_j^{-1} h^{-\alpha+\mu }) = \bigo (\varepsilon h^{-\alpha }) ,
\end{eqnarray*}
where we have used the bounds (\ref{eq-diff-om-omt}) and
(\ref{estimthm-jj}).
All further terms are smaller. This proves
(\ref{mod-Hslow}).

To relate the right-hand side of (\ref{mod-Hslow}) to the slow energy
$H_{\rm slow} (\bfp_n, \bfq_n )$
we use the modulated Fourier expansions from Theorem~\ref{thm:mfe}
and Corollary~\ref{cor:mfe-derivative},
and the first estimate of Lemma~\ref{lem:deriv} below.
We note the bound $\bfPhi \bfz^\bfzero - \bfz^\bfzero = \bigo (\varepsilon^2 )$,
which gives us $U (\bfPhi \bfz^\bfzero ) - U(\bfq_n ) = \bigo (\varepsilon )$.
This yields
\[
H_{\rm slow} (\bfp_n, \bfq_n ) =
\sfrac 12 |h^{-\alpha}\dot z_0^\bfzero |^2 + U(\bfPhi \bfz^\bfzero ) +
 \bigo (\varepsilon) ,
\]
which together with (\ref{mod-Hslow}) proves the result.
\qed
\end{proof}

\begin{lemma}\label{lem:deriv}
For $j=0$ we have
\[
p_{n,0} = h^{-\alpha} \dot z_0^\bfzero (\tau ) + \bigo (h^{2-3\alpha}) + 
\bigo (\varepsilon h^{\alpha} ) .
\]
For $j=1,\ldots ,\ell$ we have
\[
p_{n,j} = \iu \, \frac{\sin (h\varpi_j )}{\sin (h \omega_j )}\, \omega_j
\Bigl( z_j^{\jvec} (\tau )  -
z_j^{-\jvec} (\tau )\Bigr)  + \bigo (h^{1-\alpha}/\kappa ) .
\]
\end{lemma}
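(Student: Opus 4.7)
The plan is to start from the second equation of the integrator, $2h\sinc(h\bfOmega)\bfp_n = \bfq_{n+1}-\bfq_{n-1}$, substitute the modulated Fourier expansion (\ref{mfe-intheorem}) at times $t\pm h$, and Taylor-expand each modulation function $\bfz^\bfk$ around $\tau=h^{-\alpha}t$. For every $\bfk\in\calK$ the resulting difference
\[
z_j^\bfk(\tau+h^{1-\alpha})\e^{\iu(\bfk\cdot\bfvarpi)h} - z_j^\bfk(\tau-h^{1-\alpha})\e^{-\iu(\bfk\cdot\bfvarpi)h} = 2\iu s_{2\bfk}\,z_j^\bfk(\tau) + 2c_{2\bfk}h^{1-\alpha}\dot z_j^\bfk(\tau) + \iu s_{2\bfk}h^{2(1-\alpha)}\ddot z_j^\bfk(\tau) + \ldots
\]
has exactly the structure already exploited in (\ref{mod-init}). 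Added to this one must include the contribution $r_{n+1,j}-r_{n-1,j}$, which by (\ref{remainder-thm}) is negligible after dividing by $2h\sinc(h\omega_j)$.

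For $j=0$ the factor $\sinc(h\omega_0)=1$ reduces the problem to $p_{n,0}=(q_{n+1,0}-q_{n-1,0})/(2h)$. Since $s_{2\bfzero}=0$ and $c_{2\bfzero}=1$, the mode $\bfk=\bfzero$ contributes $2h^{1-\alpha}\dot z_0^\bfzero(\tau)$ plus a Taylor remainder of size $\bigo(h^{3(1-\alpha)})$, producing the main term $h^{-\alpha}\dot z_0^\bfzero$ and the first error $\bigo(h^{2-3\alpha})$ after division by $2h$. For the remaining modes $\bfk\in\calK\setminus\{\bfzero\}$ one applies (\ref{estimthm-jk1}) (with $\jvec=\bfzero$ and $\psi(h\omega_0)=1$) together with the gap bound $|s_\bfk|\ge\tfrac12 h^{1-\alpha-\mu}$ from (\ref{eq-lcbound}) to estimate $|s_{2\bfk}z_0^\bfk|\le C h^{1+\alpha+\mu}\eps^{\|\bfk\|}$; invoking $\eps\le h/c_0$ and $\|\bfk\|\ge 1$ this collapses to $\bigo(\eps h^\alpha)$ after division by $2h$. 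The higher-order Taylor terms are handled identically.

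For $j\ge 1$ the dominant contribution is from $\bfk=\pm\jvec$, which at leading Taylor order yields $2\iu s_{2\jvec}(z_j^\jvec-z_j^{-\jvec})$; multiplying by $(2h\sinc(h\omega_j))^{-1}=\omega_j/(2\sin(h\omega_j))$ produces exactly the claimed main term. The subleading Taylor corrections with $\bfk=\pm\jvec$ are of size $\bigo(h^{1-\alpha}\omega_j^{-1})$ by (\ref{estimthm-jj}) and yield $\bigo(h^{1-\alpha}/\kappa)$ after the multiplication. For $\bfk=\bfzero$ one uses (\ref{estimthm-j0}) with $|s_\jvec|\gtrsim \kappa$ (a consequence of $|\sin(h\omega_j)|\ge\kappa$, (\ref{eq-diff-om-omt}), and the relations $\alpha\le 1/4$, $\mu\le 1/8$) together with (\ref{eq-psiest}); a direct count shows the contribution stays within $\bigo(h^{1-\alpha}/\kappa)$. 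For the remaining $\bfk\ne\bfzero,\pm\jvec$ one notes that $\jvec\pm\bfk\notin\calM$ by the minimality of the representatives in $\calN$, so (\ref{eq-lcbound}) applies to both $\jvec\pm\bfk$; combining (\ref{estimthm-jk1}), (\ref{eq-psiest}), and the key identity (\ref{usefulest}) again gives $\bigo(h^{1-\alpha}/\kappa)$. The main obstacle is precisely this last step: the double gap factor $|s_{\jvec+\bfk}s_{\jvec-\bfk}|^{-1}$ in the $z_j^\bfk$ estimate would by itself spoil the bound, and it is only the telescoping inequality (\ref{usefulest}), turning one of the two gap factors into the $s_{2\bfk}$ appearing in the leading Taylor term, that makes the error estimate survive.
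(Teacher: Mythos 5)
Your proof is correct and takes essentially the same route as the paper: the paper's own (very terse) proof simply invokes Corollary~\ref{cor:mfe-derivative} together with the bounds of Theorem~\ref{thm:mfe}, the inequality (\ref{usefulest}) and the lower bound (\ref{eq-lcbound}) on $s_{\jvec\pm\bfk}$, which is exactly the mode-by-mode estimation you carry out in detail. Your closing remark correctly identifies (\ref{usefulest}) as the key step that converts one of the two small denominators $s_{\jvec\pm\bfk}$ into the factor $s_{2\bfk}$ of the leading Taylor term.
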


\begin{proof}
The estimates follow from Corollary~\ref{cor:mfe-derivative} and the
bounds from Theorem~\ref{thm:mfe}.
We use the estimate
(\ref{usefulest}) of Section~\ref{sect:bounds-mod}, and (\ref{eq-lcbound})
to bound the factors $s_{\jvec\pm \bfk}$ from below.
\qed
\end{proof}

\subsection{Almost-invariant related to the oscillatory energy}
\label{sect:invariant2}

Since the sum in the definition of $\calU (\bfz )$ is over multi-indices
$\bfk^1,\ldots ,\bfk^m$ with $\bfk^1+\ldots +\bfk^m \in\calM$ and $\|\bfk^1\| + \ldots + \|\bfk^m \| \le N+1$, we have under condition (\ref{num-nonres}) by Lemma \ref{prop-modifiedfrequencies} that then $(\bfk^1+\ldots +\bfk^m)\cdot\bfvarpi=0$, and therefore
\[
\calU (S (\theta )\bfPhi\bfz ) = \calU (\bfPhi\bfz ), \qquad S (\theta )\bfx =
(\e^{\iu (\bfk\cdot\bfvarpi )\theta} x_j^\bfk ) .
\]
Differentiating this relation with respect to $\theta$ yields
\[
0 = \frac{\d}{\d\theta}\Big|_{\theta =0} \calU (S (\theta )\bfPhi\bfz ) =
\sum_{j=0}^\ell \sum_{\bfk\in\calK} \iu (\bfk\cdot\bfvarpi ) \phi (h\omega_j)
(z_j^\bfk )^\T \nabla_j^\bfk \calU (\bfPhi\bfz )  .
\]
Similar as before we multiply the equation (\ref{mfe-equation}) by
$\phi (h\omega_j )(-\bfk\cdot\bfvarpi )(z_j^{-\bfk})^\T$ and sum over all $j\in \{0,\ldots ,\ell\}$
and $\bfk\in \calK$ to obtain
\begin{eqnarray}
 -\frac{\iu}{h^2}\sum_{j=0}^\ell\sum_{\bfk\in\calK}
\frac{\phi (h\omega_j )}{\psi (h\omega_j )} (\bfk\cdot\bfvarpi )
 \Bigl( (z_j^{-\bfk})^\T L_j^\bfk z_j^{\bfk} &-&  
2 \bigl( \cos (h\omega_j ) - \cos (h\varpi_j )\bigr) 
(z_j^{-\bfk})^\T z_j^\bfk \Bigr)\nonumber \\[-2mm]
&& \hspace{-30mm} = ~ - \iu~
\sum_{j=0}^\ell\sum_{\bfk\in\calK} (\bfk\cdot\bfvarpi )
\frac{\phi (h\omega_j )}{\psi (h\omega_j )}  (z_j^{-\bfk})^\T\delta_j^\bfk .\nonumber
\end{eqnarray}
The coefficients of the terms $(z_j^{-\bfk})^\T z_j^\bfk$  and 
$(z_j^{\bfk})^\T z_j^{-\bfk}$ in this expression have opposite sign and therefore
cancel in the sum. Consequently,
as in Section~\ref{sect:invariant1}, the formulas
of \cite[page 508]{hairer06gni} show that the left-hand
expression is a total differential.
Therefore, there exists a function
$\calI [\bfz ] (t )$, which depends on the values at $\tau = h^{-\alpha }t$ of
the function $\bfz$ and of its first $L$ derivatives, such that
\begin{equation}\label{deriv-invariant1}
\frac{\d}{\d t } \calI [\bfz ] (t ) = \bigo ( h^{N+1}/\kappa )  =  \bigo ( h^{N}) .
\end{equation}
For this estimate we use the bounds (\ref{estimthm-jj}) and (\ref{estimthm-jk2})
for $(\bfk\cdot\bfvarpi ) z_j^\bfk $, the bound
(\ref{defect-thm}) for $\delta_j^\bfk$, and the estimate (\ref{phidurchpsi}).

\begin{theorem}\label{thm:invariant2}
In the situation of Theorem~\ref{thm:mfe} we have under condition (\ref{num-nonres}) for $0\le t=nh \le h^\alpha$
\begin{eqnarray*}
 \calI [\bfz ] (t ) &=&  \calI [\bfz ] (0 ) +  \bigo ( t  h^{N}) \\[1mm]
 \calI [\bfz ] (t ) &=& H_{\bfomega}^* (\bfp_n, \bfq_n ) + \bigo (h^{1-\alpha}/\kappa ) ,
\end{eqnarray*}
where $H_{\bfomega}^* $ is defined in (\ref{mod-energies}).
\end{theorem}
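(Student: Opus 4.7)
The first estimate is immediate by integrating \eqref{deriv-invariant1} over $[0, t]$, giving $|\calI[\bfz](t)-\calI[\bfz](0)|\le Cth^N$. For the second estimate I will follow the template of the proof of Theorem~\ref{thm:invariant1}: first extract the dominant part of the primitive $\calI[\bfz]$ as an explicit expression in the modulation functions, and then match it to $H_\bfomega^*(\bfp_n,\bfq_n)$ via Lemma~\ref{lem:deriv} and \eqref{mfe-intheorem}.

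The primitive $\calI[\bfz]$ comes from the total-differential identities of \cite[p.~508]{hairer06gni} applied to the left-hand side of the equation preceding \eqref{deriv-invariant1}. The multiplier $\bfk\cdot\bfvarpi$ annihilates every $\bfk=\bfzero$ term. For $\bfk\ne\bfzero$, $\bfk\ne\pm\jvec$, the estimates \eqref{estimthm-j0}--\eqref{estimthm-jk2} together with \eqref{eq-lcbound} and the bound $|\phi/\psi|\le Ch\omega_j/\kappa$ from \eqref{phidurchpsi} keep the corresponding contribution below $\bigo(h^{1-\alpha}/\kappa)$. What survives at leading order are $j\ge 1$ with $\bfk=\pm\jvec$: here $4s_{\jvec+\bfk}s_{\jvec-\bfk}=0$, so the dominant piece of $L_j^{\pm\jvec}z_j^{\pm\jvec}$ is $\pm 2\iu s_{2\jvec}h^{1-\alpha}\dot z_j^{\pm\jvec}$, and the $(\cos h\omega_j-\cos h\varpi_j)\,|z_j^\bfk|^2$ correction cancels in the pair $\pm\jvec$ by antisymmetry of the multiplier $\bfk\cdot\bfvarpi$ under $\bfk\mapsto-\bfk$.

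Applying the total-differential procedure to $(z_j^{-\jvec})^\T\dot z_j^\jvec + (z_j^\jvec)^\T\dot z_j^{-\jvec} = \partial_\tau|z_j^\jvec|^2$, with the prefactor $-\iu(\bfk\cdot\bfvarpi)\phi(h\omega_j)/(\psi(h\omega_j)h^2)$ and summation over $\pm\jvec$, then yields
\[
\calI[\bfz] = \sum_{j=1}^\ell \frac{2\varpi_j\sin(h\varpi_j)}{h}\cdot\frac{\phi(h\omega_j)}{\psi(h\omega_j)}\,|z_j^\jvec|^2 + \bigo(h^{1-\alpha}/\kappa).
\]
Writing $\phi/\psi=\sigma\cdot h\omega_j/\sin(h\omega_j)$ and using \eqref{eq-diff-om-omt} together with \eqref{estimthm-jj} to Taylor-expand $\sin(h\varpi_j)/\sin(h\omega_j)=1+\bigo(h^{1-\alpha+\mu}/\kappa)$, this becomes $\sum_j 2\sigma(h\omega_j)\omega_j^2|z_j^\jvec|^2+\bigo(h^{1-\alpha}/\kappa)$. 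On the other hand, Corollary~\ref{cor:mfe-derivative} and the expansion \eqref{mfe-intheorem}, paired as in Lemma~\ref{lem:deriv}, express $|p_{n,j}|^2$ and $\omega_j^2|q_{n,j}|^2$ as $\omega_j^2|z_j^\jvec e^{\iu\varpi_j nh}\mp z_j^{-\jvec}e^{-\iu\varpi_j nh}|^2$ plus an error of order $h^{1-\alpha}/\kappa$; adding the two makes the cross terms cancel and produces $4\omega_j^2|z_j^\jvec|^2$, so that the weighting by $\sigma(h\omega_j)/2$ and summation over $j$ reconstruct $H_\bfomega^*(\bfp_n,\bfq_n)$ within the claimed $\bigo(h^{1-\alpha}/\kappa)$.

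The main obstacle will be the uniform bookkeeping of the small terms: contributions from pairs $(j,\bfk)$ with $\bfk\ne\pm\jvec$ can involve $|s_{\jvec\pm\bfk}|$ as small as $h^{1-\alpha-\mu}$ by \eqref{eq-lcbound}, and the higher-order terms in the Taylor expansion of the shift operator $L_j^{\pm\jvec}$ must also be shown to remain within $\bigo(h^{1-\alpha}/\kappa)$, using the nested bounds of Theorem~\ref{thm:mfe}. The factor $\kappa$ in the final error enters precisely through \eqref{phidurchpsi}.
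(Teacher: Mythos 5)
Your proposal is correct and follows essentially the same route as the paper: the first identity by integrating \eqref{deriv-invariant1}, and the second by extracting the dominant $\bfk=\pm\jvec$ contribution to the primitive $\calI$, whose coefficient $\frac{2\varpi_j\sin(h\varpi_j)}{h}\,\frac{\phi(h\omega_j)}{\psi(h\omega_j)}$ is exactly the paper's $2\sigma(h\omega_j)\omega_j^2\,\frac{\varpi_j}{\omega_j}\frac{\sin(h\varpi_j)}{\sin(h\omega_j)}$, reduced to $2\sigma(h\omega_j)\omega_j^2|z_j^\jvec|^2$ via \eqref{eq-diff-om-omt} and then matched to $H_\bfomega^*(\bfp_n,\bfq_n)$ through Lemma~\ref{lem:deriv} and \eqref{mfe-intheorem}. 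Your treatment of the matching step is in fact spelled out in more detail than the paper's, which only asserts that all remaining terms are $\bigo(h^{1-\alpha+\mu}/\kappa)$.
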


\begin{proof}
The first statement follows by integration of (\ref{deriv-invariant1}).
The dominant term of $\calI [\bfz ] (t )$ is that for $\bfk = \pm\jvec$ with
the lowest derivative. With
$\sigma (\xi )$ from (\ref{sigma}) it
is given by
\[
\sum_{j=1}^\ell \sigma (h\omega_j) 2 \omega_j^2
 \biggl( \frac{\varpi_j}{\omega_j} \frac{\sin (h\varpi_j)}{\sin (h\omega_j)} \biggr)
  \bigl| z_j^{\jvec} (h^{-\alpha }t ) \bigr|^2 .
\]
Using (\ref{eq-diff-om-omt}),
the expression in brackets is seen to be of the form $1+ \bigo (h^{1-\alpha}/\kappa )$. 
All other terms are at most of size $\bigo (h^{1-\alpha+\mu}/\kappa )$.
\qed
\end{proof}

\section{Proof of Theorem~\ref{thm:main}}
\label{sect:proof-thm12}

\subsection{Transition from one interval to the next}

Theorems \ref{thm:invariant1} and \ref{thm:invariant2} are only valid on
a short time interval of length $\nu h \le h^\alpha$. 
Here, we consider the modulated Fourier expansion corresponding to
starting values $(\bfq_\nu , \bfp_\nu )$ and compare the almost-invariants
to those corresponding to $(\bfq_0 ,\bfp_0 )$.

\begin{lemma}\label{lem:trans}
In the situation of Theorem~\ref{thm:mfe}, let $z_j^\bfk (\tau )$ be the
coefficient functions of the modulated Fourier expansion for
initial data $(\bfq_0, \bfp_0)$. We let $\widetilde z_j^\bfk (\tau )$ be
the coefficient functions corresponding to $(\bfq_\nu , \bfp_\nu )$ for
$\nu$ with $\nu h \le h^\alpha$. Then, 
\begin{eqnarray*}
\calE [\bfz ] (\nu h)  &=& \calE [\widetilde \bfz ] (0) + \bigo (h^{N-1}) \\[1mm]
\calI [\bfz ] (\nu h)  &=& \calI [\widetilde \bfz ] (0) + \bigo (h^{N-1}) .
\end{eqnarray*}
\end{lemma}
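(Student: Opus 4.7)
The plan is to introduce shifted modulation functions
\[
\bar z_j^\bfk (\tau ) := z_j^\bfk (\tau + h^{-\alpha}\nu h)\, \e^{\iu (\bfk\cdot\bfvarpi )\nu h},
\]
so that, by $\e^{\iu (\bfk\cdot\bfvarpi )(\nu + m)h} = \e^{\iu (\bfk\cdot\bfvarpi )\nu h}\,\e^{\iu (\bfk\cdot\bfvarpi )m h}$, the modulated Fourier expansion built from $\bar \bfz$ on the grid $t=mh$ reproduces $\bfq_{\nu+m}$ with the same remainder as the original expansion from Theorem~\ref{thm:mfe} at time $(\nu+m)h$, and similarly for the derivative approximation of Corollary~\ref{cor:mfe-derivative}.

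The first step is the identity $\calE [\bfz ](\nu h) = \calE [\bar\bfz ](0)$ and $\calI [\bfz ](\nu h) = \calI [\bar\bfz ](0)$. By the constructions in Sections \ref{sect:invariant1} and \ref{sect:invariant2}, both almost-invariants are finite sums of bilinear expressions in $(z_j^{\bfk}, z_j^{-\bfk})$ and their $\tau$-derivatives evaluated at $\tau = h^{-\alpha}t$. Under the shift, each such summand picks up the combined phase $\e^{\iu (\bfk + (-\bfk))\cdot\bfvarpi \nu h} = 1$, while the $\tau$-argument shift converts evaluation at $\nu h$ into evaluation at~$0$.

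The second step is to verify that $\bar\bfz$ itself satisfies the modulation equations (\ref{mfe-equation}) with the defect bound (\ref{defect-thm}). The operator $L_j^\bfk$ in (\ref{LhD}) is invariant under the simultaneous $\tau$-shift and phase rotation defining $\bar\bfz$, and the resonance-module structure of $\calU$ in (\ref{calU}) (summation over $\bfk^1+\cdots+\bfk^m\in\calM$) together with (\ref{eq-lczero}) ensures that the phase factors produced by each summand of $\nabla_j^{-\bfk}\calU (\bfPhi\bar\bfz )$ combine to $\e^{\iu (\bfk\cdot\bfvarpi )\nu h}$, as required. Evaluating the expansions of Theorem~\ref{thm:mfe} and Corollary~\ref{cor:mfe-derivative} at $n=\nu$ moreover shows that $\bar\bfz$ satisfies the initial-value relations analogous to (\ref{mod-init2})-(\ref{mod-init3}) for the data $(\bfq_\nu , \bfp_\nu )$ up to a remainder of size $\bigo (h^{N+1})$ in the $j=0$ components and $\bigo (\omega_j^{-1}h^N )$ in the $j\ge 1$ components, inherited from $\bfr_\nu$ and $\dot\bfr_\nu$.

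Since $\widetilde\bfz$ is constructed from exactly the same data $(\bfq_\nu , \bfp_\nu )$ via the Picard scheme of Section~\ref{sect:construction}, running the same contractive iteration on the difference $\bar\bfv - \widetilde\bfv$ in the rescaled variables of Section~\ref{sect:defect}—with contraction factor $h^\mu$ in the $C^r$-norm by (\ref{eq:induction})—yields $\| \bar\bfv (0) - \widetilde\bfv (0) \|_{C^r} = \bigo (h^N )$ for $r$ as large as required. Inserting this into the explicit bilinear expressions for $\calE$ and $\calI$, and using the bound $|\phi (h\omega_j )/\psi (h\omega_j )| = \bigo (h\omega_j /\kappa )$ from (\ref{phidurchpsi}) together with the lower bounds on $|s_{\jvec \pm \bfk}|$ from Lemma~\ref{prop-modifiedfrequencies}, gives $\bigo (h^{N-1})$, as claimed. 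The main obstacle is this final comparison: the almost-invariants involve derivatives up to order $L$, and each $\tau$-derivative in the Picard iteration can cost a factor of $h^{-\alpha}$, so the $C^r$-norm has to be chosen carefully and one has to track how the prefactors $\phi/\psi$ and $1/s_{\jvec\pm\bfk}$ dilute the very small error in $\bar\bfv - \widetilde\bfv$ into the stated $\bigo (h^{N-1})$.
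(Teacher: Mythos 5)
Your proposal follows essentially the same route as the paper's proof: shift the constructed modulation functions by $\nu h^{1-\alpha}$ in $\tau$ (with the compensating phase $\e^{\iu(\bfk\cdot\bfvarpi)\nu h}$, which the paper leaves implicit), observe that they satisfy the modulation equations and the initial-value relations for $(\bfq_\nu,\bfp_\nu)$ up to the defect and the remainders $\bfr_\nu$, $\dot\bfr_\nu$, run the contractive iteration of the defect section on the difference with the new functions, and finish with Lipschitz estimates for $\calE$ and $\calI$. The only quibble is quantitative: the intermediate bound for the difference in the rescaled variables is $\bigo(h^{N-\alpha}/\kappa)$ rather than your $\bigo(h^{N})$ (the inhomogeneity from $\dot\bfr_\nu$ carries a factor $1/\kappa$), but this still yields the stated $\bigo(h^{N-1})$ after conversion back to the modulation functions.
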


\begin{proof}
Let $\bfz^M = ([z^\bfk_j]^M)$ be the last iterate in the construction of the
modulation function $\bfz$ for initial data $(\bfq_0, \bfp_0)$, and
let $\widetilde{\bfz}^m = ([\widetilde{z}^\bfk_j]^m)$
be the $m$th iterate of the modulation function~$\widetilde{\bfz}$
corresponding to initial data $(\bfq_\nu , \bfp_\nu )$.
We aim in estimating the difference $\Delta \bfz^m (\tau) =
\bfz^M (\nu h^{1-\alpha}+\tau) - \widetilde{\bfz}^m(\tau)$.
The functions $[\widetilde z_j^\bfk ]^m$ satisfy the relations of Section~\ref{sect:construction}
with $(\bfq_0, \bfp_0)$ replaced by $(\bfq_\nu, \bfp_\nu)$.
The functions $[z^\bfk_j]^M$ satisfy the same relations, where the superscripts
$m$ and $m+1$ are changed to $M$ and the defect $\delta_j^\bfk$ is
added (see the formulas of Section~\ref{sect:defect}).
For the rescaled differences
$\Delta\bfv^m(\tau) = \bfLambda\Delta\bfz^m (\tau )$ the same arguments as in
Section~\ref{sect:defect} yield
\begin{equation}\label{interface}
\| \Delta\bfv^{m+1}\|_{C^r} \le C h^\mu
\| \Delta\bfv^{m} \|_{C^{r+L}} + D h^{N-\alpha}/\kappa
\end{equation}
with $r=L(M-m)$. The bounds (\ref{defect-thm}) for the defect introduce an
inhomogeneity of size $\bigo (h^{N+1})$ in (\ref{interface}), whereas the bounds
(\ref{remainder-thm}) for the difference between $\bfq_n$ and $\widetilde\bfq_n$
of (\ref{qntilde}) and those of (\ref{remainderdot}) for the
derivative approximations introduce an inhomogeneity of size $\bigo (h^{N-\alpha}/\kappa)$.
From (\ref{interface}) it follows by induction on $m$ that
\[
\| \Delta\bfv^{m}\|_{C^{L(M-m+1)}} \le (Ch^\mu )^{m}
\|\Delta\bfv^0\|_{C^{L(M+1)}} + mDh^{N-\alpha}/\kappa .
\]
Since the bounds of Theorem~\ref{thm:mfe}
give $\|\Delta\bfv^0\|_{C^{L(M+1)}} = \bigo (h^{-\alpha})$, we obtain after
$M\ge (N+\alpha )\mu$ iterations that 
$\|\Delta\bfv^M\|_{C^L} = \bigo (h^{N-\alpha}/\kappa)$ and consequently also
\[
\| \Delta \bfz ^M \|_{C^L} = \bigo (h^{N-1}) .
\]
Using Lipschitz estimates for $\calE$ and $\calI$ yields the result.
\qed
\end{proof}

\subsection{From short to long time intervals}

We put the estimates for many short time intervals of length
$\nu h\le h^\alpha$
together to get the long-time result of Theorem~\ref{thm:main}. For $m=0,1,2,\dots$, let $\bfz_m(\tau )$ collect the coefficient
functions of the modulated Fourier expansion starting from
$(\bfq_{m\nu}, \bfp_{m\nu})$. 
Since we aim at proving that the modified oscillatory energy $H_\bfomega^*$
remains nearly constant, we consider,
instead of the bounded energy assumption
(\ref{bound-energy}), the condition
\begin{equation}\label{bound-energy2}
\bigl| H_\bfomega^* \bigl(\bfp (0) ,\bfq (0)\bigr) \bigr| + \bigl| H_{\rm slow} \bigl(\bfp (0) ,\bfq (0)\bigr) \bigr| \le E^*.
\end{equation}
This condition follows with $E^*=(C_1+1)E + (C_1+2)\widehat{K}$ from Assumptions A and B, where $\widehat{K}$ denotes the bound of the potential $U$ on the set $K_\rho$ of Assumption A. 
Since $|\sigma (h\omega_j )|\ge c_1 >0$
and all $\sigma (h\omega_j )$ have the same sign,
we can use (\ref{bound-energy2}) instead of (\ref{bound-energy}) in the proof of
Theorem~\ref{thm:mfe} (estimates (\ref{bounds-initial})).

As long as
(\ref{bound-energy2}) holds with $2E^*$ instead of $E^*$, Theorem~\ref{thm:invariant2}
yields for $0\le nh \le h^\alpha$
$$
\bigl|\calI [ \bfz_m] (nh) - \calI [ \bfz_m] (0) \bigr| \le C nh^{N+1}.
$$
By Lemma~\ref{lem:trans},
$$
\bigl| \calI [\bfz_m] (\nu h)-\calI [\bfz_{m+1}] (0) \bigr|  \le C h^{N-1}.
 $$
 Summing up these estimates over $m$ and applying the triangle inequality yields, for
 $0\le n \le \nu$,
$$
\bigl|\calI [\bfz_{m}] (nh) -  \calI [\bfz_0 ] (0) \bigr| \le (m+(m\nu +n)h^2)C h^{N-1}.
$$
By Theorem~\ref{thm:invariant2}, we have
$$
\bigl|\calI [\bfz_{m}] (nh) - H_{\bfomega}^* (\bfp_{m\nu +n}, \bfq_{m\nu +n})\bigr| \le
C' h^{1-\alpha}/\kappa .
$$
Combining these bounds we obtain for $t=(m\nu+n)h$ (and $\nu h \approx h^\alpha$)
$$
\bigl|H_{\bfomega}^* (\bfp_{m\nu +n}, \bfq_{m\nu +n})
- H_{\bfomega}^* (\bfp_{0}, \bfq_{0})\bigr| \le 2C t h^{N-1-\alpha} + 2C'
h^{1-\alpha}/\kappa ,
$$
which is $\bigo (h^{1-\alpha}/\kappa )$ for $t\le h^{-N+2}$. In the same way, we obtain for $t=(m\nu+n)h$ (and $\nu h \approx h^\alpha$), using the almost-invariant $\calE$ and Theorem \ref{thm:invariant1} instead of $\calI$ and Theorem \ref{thm:invariant2},
\[
\bigl|H_{\rm slow} (\bfp_{m\nu +n}, \bfq_{m\nu +n})
- H_{\rm slow} (\bfp_{0}, \bfq_{0})\bigr| \le 2C t h^{N-1-\alpha} + 2C'
 \bigl(\eps h^{-\alpha} + h^{2(1-\alpha)} \bigr),
\]
which is $\bigo (\eps h^{-\alpha}) + \bigo (h)$ for $t\le \eps h^{-N+1}$ and $\bigo (h^{1-\alpha})$ for $t\le h^{-N+2}$. 
These estimates ensure that, for sufficiently small step size,
(\ref{bound-energy2}) holds with $2E^*$ instead of $E^*$ on such time intervals.
Replacing the
arbitrary integer $N$ by $N+2$ yields the statements of Theorem~\ref{thm:main}.

\providecommand{\bysame}{\leavevmode\hbox to3em{\hrulefill}\thinspace}

\end{document}